\theoremstyle{definition}
\newtheorem{theorem}{Theorem}[section]
\newtheorem{lemma}[theorem]{Lemma}
\newtheorem{prop}[theorem]{Proposition}
\newtheorem{que}[theorem]{Question}
\title{Diophantine equations via cluster transformations}
\author{Philipp Lampe}
\begin{document}
\maketitle
\abstract{Motivated by Fomin-Zelevinsky's theory of cluster algebras we introduce a variant of the Markov equation; we show that all natural solutions of the equation arise from an initial solution by cluster transformations.}
\tableofcontents

\section{Cluster algebras}

\subsection{Introduction}

Fomin and Zelevinsky's cluster theory provides a common combinatorial framework for problems in representation theory, Lie theory, hyperbolic geometry and mathematical physics. The theory was initiated in a series of four influential papers \cite{FZ,FZ2,BFZ,FZ4}. The key notion to define a cluster algebra is the so-called \textit{mutation}, which we will recall in the next section. We can mutate \textit{quivers} and \textit{cluster variables}. Surprisingly, cluster algebra mutations describe interesting phenomena in various branches of mathematics. For example, a correspondence between non-initial cluster variables and positive roots in a certain root system establishes a link to Lie theory. There are deeper links to Lie theory. In fact, a conjectural correspondence between cluster monomials and Lusztig's canonical basis elements was one of the original motivations to introduce cluster algebras. For a different example, the Caldero-Chapoton map \cite{CC} links cluster algebras with quiver representations. In this context, mutation is related to tilting. Cluster algebras also occur in hyperbolic geometry, where mutation is related to Ptolemy's theorem. Some cluster algebras admit invariants of mutation, which play an important role in the context of dynamical systems. 

Markov's equation provides a common number-theoretic framework for problems in representation theory, geometry and arithmetic. The key notion to solve the Markov equation is sometimes called \textit{Vieta jumping}. We will recall it in Section \ref{SectionMarkov}. Although we can solve the Markov equation by elementary methods, it describes interesting phenomena in various branches in mathematics. For example, Gorodentsev-Rudakov \cite{GR} show that the solutions of the Markov equation describe ranks of exceptional vector bundles on projective spaces.

In this article, we wish to show how cluster mutations can generate all solutions of a particular Diophantine equation. The form of the Diophantine equation is related to a Laurent polynomial in an upper cluster algebra, which is invariant under mutation. In Section \ref{SectionRudiments} we recall the basic notions of cluster theory. Especially, we give a formal definition of the aforementioned mutation process and present its main features. In the following Section \ref{SectionSurface} we introduce Fomin-Shapiro-Thurston's surface cluster algebras \cite{FST}. The construction is based on work of Fock-Goncharov \cite{FG1,FG2} and Gekhtman-Shapiro-Vainshtein \cite{GSV}. Surface cluster algebras provide interesting instances of cluster algebras with invariants of mutation. Section \ref{SectionMarkov} introduces and solves Markov's equation and interprets it from a cluster theoretic point of view. We introduce a new Diophantine equation in Section \ref{SectionVariant} and show that we can solve the equation by cluster transformations. In Section \ref{SectionTorusMinus} we study another cluster algebra with an invariant and ask some questions about the corresponding Diophantine equation.

\subsection{Rudiments of cluster algebras}
\label{SectionRudiments}

Let us briefly recall the definition of a cluster algebra. For the rest of the section, we fix a positive integer $n\in\mathbb{N}$. For brevity, we write $[n]$ instead of $\{1,2,\ldots,n\}$. 

A key notion of cluster theory is the \textit{mutation} of \textit{skew-symmetrizable matrices}. Here an $n\times n$ integer matrix $B$ is called \textit{left skew-symmetrizable} if there exists an $n\times n$ diagonal matrix $D=\operatorname{diag}(d_1,\ldots,d_n)$ with $d_1,\ldots,d_n>0$ such that the matrix $DB$ is skew-symmetric. We define \textit{right skew-symmetrizability} analogously. An easy proposition asserts that $B$ is left skew-symmetrizable if and only if it is right skew-symmetrizable. In this case we say that $B$ is \textit{skew-symmetrizable}. Two skew-symmetrizable $n\times n$ matrices $B=(b_{ij})_{i,j\in [n]}$ and $C=(c_{ij})_{i,j\in[n]}$ are called \textit{isomorphic} if there exists a bijection $\sigma\in S_n$ such that $c_{ij}=b_{\sigma(i),\sigma(j)}$ for all $i,j$. In this case we write $B\simeq C$.   

Every skew-symmetric matrix is skew-symmetrizable with $D=I_n$. We can interpret a skew-symmetric matrix as the \textit{signed adjacency matrix} of a \textit{quiver} without loops and $2$-cycles. Here, a quiver is a finite directed graph, i.\,e. a quadruple $Q=(Q_0,Q_1,s,t)$ consisting of a finite set $Q_0$ of vertices, a finite set $Q_1$ of arrows, and maps $s,t\colon Q_1\to Q_0$ that assign to every arrow its starting and its terminating vertex. Its signed adjacency matrix is the integer square matrix $B=B(Q)=(b_{ij})_{i,j\in Q_0}$ with entries $b_{ij}=a_{ij}-a_{ji}$ where $a_{ij}$ is the number of arrows that start in $s(\alpha)=i$ and terminate in $t(\alpha)=j$. By definition, the signed adjacency matrix is skew-symmetric. Let us assume that $Q$ does not contain closed paths of length $2$ (and especially no loops). In this case we can recover $Q$ from its signed adjacency matrix up to isomorphism. More precisely, two quivers $Q$ and $Q'$ satisfy $B(Q)\simeq B(Q')$ if and only $Q\simeq Q'$.

The \textit{mutation} of a skew-symmetrizable integer matrix $B=(b_{ij})_{i,j\in[n]}$ at an index $k\in [n]$ is a new matrix $\mu_k(B)=B'=(b_{ij}')_{i,j\in[n]}$ whose entries are given by the formula 
\begin{align*}
b'_{ij}=\begin{cases}-b_{ij},&\textrm{if \ } k\in\{i,j\};\\ b_{ij}+\frac{1}{2}(b_{ik}\vert b_{kj}\vert+\vert b_{ik}\vert b_{kj}), &\textrm{if \ } k\notin\{i,j\}.\end{cases}
\end{align*} 
Note that $B'$ is again skew-symmetrizable with the same skew-symmetrizer. Moreover, an easy argument shows that mutation is involutive, i.\,e. we have $(\mu_k\circ\mu_k)(B)=B$ for all $k\in[n]$. \textit{Mutation equivalence} is the smallest equivalence relation on the set of skew-symmetrizable integer $n\times n$ matrices that is closed under isomorphism and mutation. In other words, we call two skew-symmetrizable integer $n\times n$ matrices $B$ and $B'$ mutation equivalent if there exists a sequence $(k_1,\ldots,k_r)\in[n]^r$ of length $r\geq 0$ such that $(\mu_{k_r}\circ\cdots\circ\mu_{k_1})(B)\simeq B'$. The matrix $B$ is called \textit{mutation finite} if its mutation equivalence class is finite.

Let $K$ be a field of characteristic $0$. A \textit{cluster} is a tuple $\mathbf{x}=(x_i)_{i\in[n]}$ of algebraically independent variables over $K$. An element $x_i$ of a cluster $(x_i)_{i\in [n]}$ is called \textit{cluster variable}. A \textit{seed} is a pair $(B,\mathbf{x})$ where $B$ is a skew-symmetrizable integer $n\times n$ matrix and $\mathbf{x}=(x_i)_{i\in [n]}$ is a cluster. The \textit{mutation} of a seed $(B,\mathbf{x})$ at a vertex $k\in [n]$ is a new seed $\mu_k(B,\mathbf{x})=(B',\mathbf{x}')$ where $B'=\mu_k(B)$ and $\mathbf{x}'$ is obtained from $\mathbf{x}$ by replacing $x_k$ with
\begin{align*}
x_k'=\frac{1}{x_k}\left(\prod_{i\in [n]\colon b_{ik}>0}x_i^{b_{ik}}+\prod_{i\in [n]\colon b_{ik}<0}x_i^{-b_{ik}}\right)\in K(x_i\colon i\in [n]).
\end{align*}
It is easy to see that $\mathbf{x}'$ is algebraically independent so that $(B',\mathbf{x}')$ is again a seed. The above equation is sometimes called \textit{exchange relation}. As before, mutation is involutive, i.\,e. we have $(\mu_k\circ\mu_k)(B,\mathbf{x})=(B,\mathbf{x})$ for all $k\in [n]$. Two seeds $(B,\mathbf{x})$ and $(C,\mathbf{y})$ are called \textit{isomorphic} if there exists a bijection $\sigma\in S_n$ such that $c_{ij}=b_{\sigma(i),\sigma(j)}$ for all $i,j\in [n]$ and $x_k=y_{\sigma(k)}$ for all $k\in [n]$. In this case we write $(B,\mathbf{x})\simeq(C,\mathbf{y})$. \textit{Mutation equivalence} is the smallest equivalence relation on the class of seeds that is closed under isomorphism and mutation. In other words, we call two seeds $(B,\mathbf{x})$ and $(C,\mathbf{y})$ mutation equivalent if there exists a sequence $(k_1,\ldots,k_r)\in [n]^r$ of length $r\geq 0$ such that $(\mu_{k_r}\circ\cdots\circ\mu_{k_1})(B,\mathbf{x})\simeq (C,\mathbf{y})$.

Let $(B,\mathbf{x})$ be a seed. The field $\mathcal{F}=K(x_i^{\pm} \colon i\in [n])$ is called the \textit{ambient field}. The definition of mutation implies that the ambient field contains the union 
\begin{align*}
\chi(B,\mathbf{x})=\bigcup_{(C,\mathbf{y})\simeq(B,\mathbf{x})} \{y_i\colon i\in [n]\}
\end{align*}
of all clusters in all seeds in the mutation equivalence class of $(B,\mathbf{x})$. We define the \textit{cluster algebra} $\mathcal{A}(B,\mathbf{x})$ to be the $K$-subalgebra of $\mathcal{F}$ generated by $\chi(B,\mathbf{x})$. It follows from the definition that $\mathcal{A}(B,\mathbf{x})=\mathcal{A}(C,\mathbf{y})$ if $(B,\mathbf{x})\simeq(C,\mathbf{y})$. Therefore, we call a seed $(C,\mathbf{y})$ in the mutation equivalence class of $(B,\mathbf{x})$ a \textit{seed of} $\mathcal{A}(B,\mathbf{x})$, the element $\mathbf{y}$ a \textit{cluster of} $\mathcal{A}(B,\mathbf{x})$, and an element in $\chi(B,\mathbf{x})$ a \textit{cluster variable of} $\mathcal{A}(B,\mathbf{x})$. A monomial in the cluster variables of a single cluster of a cluster algebra is called \textit{cluster monomial}. Sometimes we write $\mathcal{A}(B)$ instead of $\mathcal{A}(B,\mathbf{x})$, because there is an algebra isomorphism $\mathcal{A}(B,\mathbf{x})\cong\mathcal{A}(B,\mathbf{y})$ for all clusters $\mathbf{x},\mathbf{y}$. We also write $\mathcal{A}(Q,\mathbf{x})$ instead of $\mathcal{A}(B(Q),\mathbf{x})$ when we construct the matrix $B$ from the quiver $Q$. Note that $\mathcal{A}(B)=\mathcal{A}(-B)$ for all skew-symmetrizable matrices $B$. Furthermore, we refer to the integer $n$ as the \textit{rank} of the cluster algebra $\mathcal{A}(B,\mathbf{x})$.

Fomin and Zelevinsky prove two main theorems about cluster algebras. The first theorem is called Laurent phenomenon \cite{FZ}. It asserts that every cluster variable of $\mathcal{A}(B,\mathbf{x})$ is a Laurent polynomial in $\mathbf{x}$ with integer coefficients. Especially, the cluster algebra $\mathcal{A}(B,\mathbf{x})\subseteq K[x_i^{\pm 1}\colon i\in [n]]$ is a subalgeba of the algebra of Laurent polynomials in $\mathbf{x}$. The second main theorem is the classification of cluster algebras with finitely many cluster variables by finite type roots systems \cite{FZ2}. More precisely, a cluster algebra $\mathcal{A}(B)$ admits only finitely many cluster variables if and only if $B$ is mutation equivalent to a matrix whose Cartan counterpart is the Cartan matrix of a root system of finite type. For example, the cluster algebra $\mathcal{A}(Q)$ attached to a connected quiver $Q$ admits only finitely many cluster variables if and only if $Q$ is mutation equivalent to an orientation of a Dynkin diagram of type $A$, $D$, or $E$. (The Dynkin diagrams of type $B$, $C$, $F$, and $G$ arise from cluster algebras with skew-symmetrizable $B$-matrices.) In this case, thanks to the Laurent phenomenon a non-initial cluster variable may be written as $P/(\prod_{i\in Q_0}x_i^{a_i})$ for some polynomial $P\in\mathbb{Z}[x_i\colon i\in [n]]$ and some natural numbers $a_i\in\mathbb{N}$. Then the sums $\sum_{i\in Q_0}a_i\alpha_i$ are precisely the positive roots in the corresponding root system written as a linear combination in the simple roots $\alpha_i$. It follows from this description that if $\mathcal{A}(Q,\mathbf{x})$ has finitely many cluster variables, then $Q$ is mutation finite.

Sometimes we \textit{freeze} quiver vertices or matrix indices. In the context of quivers this means, we partition the set of vertices $$Q_0=Q_0^{(mu)}\sqcup Q_0^{(fr)}$$ in two sets called \textit{mutable} and \textit{frozen} vertices and allow only mutations at mutable vertices. In this way we obtain a smaller generating set of cluster variables for the cluster algebra $\mathcal{A}(Q)$. The variables attached to the frozen vertices are called \textit{frozen variables}. Fomin-Zelevinsky's classification \cite{FZ4} generalizes to quivers with frozen vertices. More precisely, a cluster algebra $\mathcal{A}(Q)$ attached to a connected quiver $Q$ with frozen vertices admits only finitely many cluster variables if and only if the full subquiver of $Q$ on the mutable vertices is mutation equivalent to an orientation of a Dynkin diagram of type $A$, $D$, or $E$.   

Grabowski \cite{G} constructs gradings of cluster algebras. More precisely, if $v\in\mathbb{Z}^n$ is a solution to the equation $v^TB=0$, then we can equip $\mathcal{A}(B,\mathbf{x})$ with a grading such that $\operatorname{deg}(x_i)=v_i$ for all $i$.

Let $(B,\mathbf{x})$ be a seed. Berenstein-Fomin-Zelevinsky introduce the \textit{upper cluster algebra} $\overline{\mathcal{A}}(B,\mathbf{x})$ as the intersection of Laurent polynomial rings
\begin{align*}
\overline{\mathcal{A}}(B,\mathbf{x})=\bigcap_{(C,\mathbf{y})\simeq(B,\mathbf{x})} K[y_i^{\pm 1}\colon i\in [n]]\subseteq K(x_i^{\pm 1}\colon i\in [n])=\mathcal{F}.
\end{align*}
The Laurent phenomenon implies $\mathcal{A}(B,\mathbf{x})\subseteq\overline{\mathcal{A}}(B,\mathbf{x})$.

\subsection{Surface cluster algebras}
\label{SectionSurface}

In this section we briefly describe how Fomin-Shapiro-Thurston \cite{FST} (generalizing work of Fock-Goncharov \cite{FG1,FG2} and Gekhtman-Shapiro-Vainshtein \cite{GSV}) associate \textit{a cluster algebra} $\mathcal{A}=\mathcal{A}(\Sigma,M)$ with a bordered surface $\Sigma$ with marked points $M$. This class of cluster algebras is interesting for several reasons. For example, the construction always yields mutation finite cluster algebras, see Fomin-Shapiro-Thurston \cite{FST}. In fact, by a theorem of Felikson-Shapiro-Tumarkin \cite{FST2} almost all mutation finite quivers of large rank arise in this way. We start with basic notions.

A \textit{bordered surface} is a compact, connected, oriented, $2$-dimensional Riemann surface $\Sigma$ with or without boundary. Moreover, a \textit{bordered surface with marked points} is a pair $(\Sigma,M)$ consisting of a bordered surface $\Sigma$ together with a finite set $M\subseteq\Sigma$ such that every connected component of the boundary $\partial \Sigma$ contains at least one point of $M$. In this case, elements in the set $M$ are called \textit{marked points} and marked points in the interior of $M$ are called \textit{punctures}. A \textit{homeomorphism} between two bordered surfaces with marked points $(\Sigma,M)$ and $(\Sigma',M')$ is a homeomorphism $\phi\colon\overline{\Sigma}\to\overline{\Sigma'}$ such that $\phi\vert_{M}\colon M\to M'$ is a bijection. In this case,  $(\Sigma,M)$ and $(\Sigma',M')$ are called \textit{homeomorphic}. A typical example of a bordered surface with marked points is a regular $n$-gon together with its set of vertices; it is homeomorphic to a disk with $n$ marked points on the boundary. We refer to the cases $n=1$ and $n=2$ as a monogon or a bigon, respectively.

Let us assume that $(\Sigma,M)$ is a bordered surface with marked points, which, for a technical reason, is not homeomorphic to a sphere with exactly one puncture, a sphere with exactly two punctures, a sphere with exactly three punctures, a monogon without a puncture, a monogon with exactly one puncture, a bigon without a puncture, or a triangle without a puncture. Note that, up to homeomorphism the bordered surface $(\Sigma, M)$ is determined by the genus $g$ of $\Sigma$, the number $p$ of punctures, the number $b$ of boundary components and the sequence $(n_1,\ldots,n_b)$ of numbers of marked points on the boundary components.

An \textit{arc} is a curve $\gamma\colon[0,1]\to \Sigma$ such that the endpoints $\gamma(0),\gamma(1)$ lie in $ M$, the restriction $\gamma\vert_{(0,1)}\colon (0,1)\to \Sigma$ is injective and its image is disjoint from $M\cup\partial \Sigma$, and it does not cut out a monogon without punctures or a bigon without punctures. Two arcs $\gamma,\gamma'$ are called \textit{compatible} if the images $\gamma(0,1)$ and $\gamma'(0,1)$ are disjoint, i.\,e. the arcs are disjoint except for possible intersections at the endpoints. Two arcs $\gamma,\gamma'$ are called \textit{isotopic} if there is a homotopy $H\colon [0,1]\times [0,1]\to\Sigma$ from $\gamma$ to $\gamma'$, i.\,e. a map with $H(0,-)=\gamma$ and $H(1,-)=\gamma'$, such that $H(t,-)\colon [0,1]\to \Sigma$ is an arc for all $t\in [0,1]$.

A \textit{triangulation} $\mathcal{T}$ of $(\Sigma,M)$ is a maximal collection of pairwise compatible and not isotopic arcs. Two triangulations $\mathcal{T}$ and $\mathcal{T}'$ are called \textit{equivalent} if there exists a map $\pi\colon[0,1]\times\Sigma\to\Sigma$ with $\pi(0,\mathcal{T})=\mathcal{T}$ and $\pi(1,\mathcal{T})=\mathcal{T}'$ such that $\pi(t,-)$ induces a homeomorphism of bordered surfaces with marked points for all $t\in[0,1]$. An arc is called a \textit{boundary arc} if it is isotopic to an arc in the boundary $\partial \Sigma$. Otherwise it is called \textit{flippable arc}. Flippable arcs have a remarkable property. If a flippable arc is part of a triangulation $\mathcal{T}$, then there exists an arc $\gamma'$ such that $\gamma'$ is not isotopic to $\gamma$ and $\mathcal{T}'=(\mathcal{T}\backslash\{\gamma\})\cup\{\gamma'\}$ is a triangulation of $(\Sigma,M)$. The conditions determine the arc $\gamma'$ uniquely up to isotopy. The triangulation $\mathcal{T}'$ is called the \textit{flip} of $\mathcal{T}$ at $\gamma$ and is denoted by $\mu_{\gamma}(\mathcal{T})$.

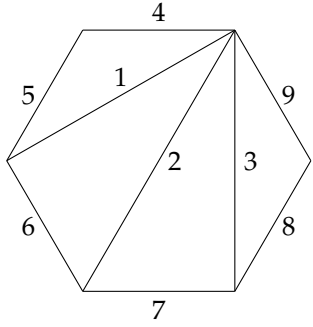
\begin{figure}
\begin{center}
\begin{tikzpicture}[auto,scale=2.0]
    \path[-] (1,0) edge node[above,right] {$9$} (0.5,0.866);
    \path[-] (0.5,0.866) edge node[above] {$4$} (-0.5,0.866);
    \path[-] (-0.5,0.866) edge node[above,left] {$5$} (-1,0);
    \path[-] (-1,0) edge node[below,left] {$6$} (-0.5,-0.866);
    \path[-] (-0.5,-0.866) edge node[below] {$7$} (0.5,-.866);
    \path[-] (0.5,-.866) edge node[below,right] {$8$} (1,0);
    
    \path[-] (-1,0) edge node[above] {$1$} (0.5,0.866);
    \path[-] (-0.5,-0.866) edge node[right] {$2$} (0.5,0.866);
    \path[-] (0.5,-0.866) edge node[right] {$3$} (0.5,0.866);
\end{tikzpicture}
\end{center}
\label{Figure:A}
\caption{A triangulation of a hexagon}
\end{figure}

Now we are ready to construct the cluster algebra $\mathcal{A}(\Sigma,M)$. On a combinatorial level, the seeds of the cluster algebra $\mathcal{A}(\Sigma,M)$ correspond to (equivalence classes of) triangulations of $(\Sigma,M)$. Let $\mathcal{T}$ be a triangulation of $(\Sigma,M)$ and let us denote its seed by $(Q(\mathcal{T}),\mathbf{x}(\mathcal{T}))$. The mutable vertices in the quiver $Q(\mathcal{T})$ are defined to be the (isotopy classes of) flippable arcs in $\mathcal{T}$, whereas its frozen vertices are defined to be the (isotopy classes of) boundary arcs in $\mathcal{T}$. The arrows in $Q(\mathcal{T})$ are constructed using orientation of the triangles in the triangulation. More precisely, for a triangle which is bounded by arcs $\gamma_1,\gamma_2,\gamma_3$ in this order we introduce arrows $\gamma_1\to\gamma_2$, $\gamma_2\to\gamma_3$ and $\gamma_3\to\gamma_1$. Flips of triangulations correspond to mutations of quivers. More precisely, a proposition asserts that for every flippable arc $\gamma$ in a triangulation we have $Q(\mu_{\gamma}(\mathcal{T}))=\mu_{\gamma}(Q(\mathcal{T}))$. The cluster exchange relations have an interpretation as \textit{Ptolemy relations}. Here, we think of a cluster or frozen variable $x_{\gamma}$ as the \textit{lambda length} of the arc $\gamma$. The lambda lengths satisfy a remarkable theorem which we may view as an analogue of Ptolemy's theorem in Euclidean geometry: assume that a quadrangle is bounded by arcs $\alpha$, $\beta$, $\gamma$ and $\delta$ and that its two diagonal arcs are $\epsilon$ and $\digamma$. Then we have $x_{\alpha}x_{\gamma}+x_{\beta}x_{\delta}=x_{\epsilon}x_{\digamma}$. The equation is precisely the exchange relation in the cluster algebra.

An example is the disk $\Sigma=\mathcal{D}^1$ with $n+3$ points on the boundary. A triangulation consists of $n+3$ boundary arcs and $n$ flippable arcs. The cluster algebra has finitely many cluster variables, because $(\Sigma,M)$ admits only finitely many arcs. In fact, it is of type $A_n$, because it admits several triangulations whose quivers are orientations of Dynkin diagrams of type $A_n$. Figure \ref{Figure:A} shows a disk with $6$ marked points on the boundary. The number of triangulations is equal to the Catalan number $C_{n}=\frac{1}{n+2}\binom{2n+2}{n+1}=14$. The quiver of the chosen triangulation is equal to $1\to2\to3$.

\section{Solving Diophantine equations by cluster transformations}

\subsection{The Markov equation}
\label{SectionMarkov}

In this section we want to study the cluster algebra attached to the torus with exactly one marked point in the interior more closely. It is related to the \textit{Markov equation}. The material in this section is classical and the number theoretic part goes back to Markov \cite{M}. For the relation to cluster algebras see for example Peng-Zhang \cite{PZ}.

Figure \ref{Figure:TriangulationMarkov} shows the universal cover of the surface together with a triangulation $\mathcal{T}$ made of two triangles. If we label the arcs of this triangulation by $1$, $2$, $3$, then the associated quiver $Q=Q(\mathcal{T})$ has two arrows $1\rightrightarrows 2$, two arrows $2\rightrightarrows 3$ and two arrows $3\rightrightarrows 1$. The corresponding signed adjacency matrix is
\begin{align*}
B=B(Q)=\left(\begin{matrix}
0&2&-2\\
-2&0&2\\
2&-2&0
\end{matrix}\right).
\end{align*}
The following proposition is immediate.

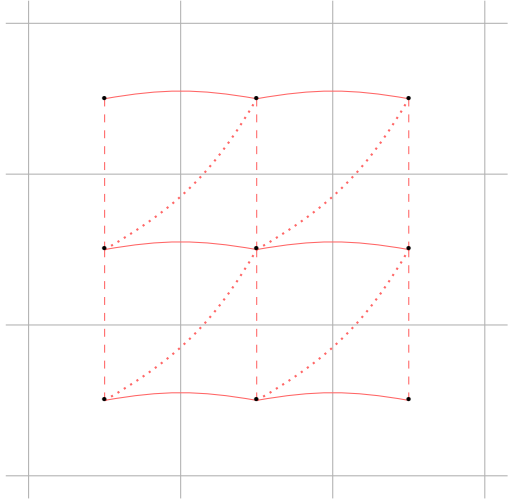
\begin{figure}[h!]
\begin{center}
\begin{tikzpicture}

\newcommand\grid{1}
\newcommand\uber{0.3}

\draw[color=black!30!] (0,0-\uber) to (0,6*\grid+\uber);
\draw[color=black!30!] (2*\grid,0-\uber) to (2*\grid,6*\grid+\uber);
\draw[color=black!30!] (4*\grid,0-\uber) to (4*\grid,6*\grid+\uber);
\draw[color=black!30!] (6*\grid,0-\uber) to (6*\grid,6*\grid+\uber);
\draw[color=black!30!] (0-\uber,0) to (6*\grid+\uber,0);
\draw[color=black!30!] (0-\uber,2*\grid) to (6*\grid+\uber,2*\grid);
\draw[color=black!30!] (0-\uber,4*\grid) to (6*\grid+\uber,4*\grid);
\draw[color=black!30!] (0-\uber,6*\grid) to (6*\grid+\uber,6*\grid);

\coordinate(AA) at (\grid,\grid) {};
\coordinate(AB) at (3*\grid,\grid) {};
\coordinate(AC) at (5*\grid,\grid) {};
\coordinate(BA) at (\grid,3*\grid) {};
\coordinate(BB) at (3*\grid,3*\grid) {};
\coordinate(BC) at (5*\grid,3*\grid) {};
\coordinate(CA) at (\grid,5*\grid) {};
\coordinate(CB) at (3*\grid,5*\grid) {};
\coordinate(CC) at (5*\grid,5*\grid) {};

\draw[bend left=10,color=red!60!] (AA) to (AB);
\draw[bend left=10,color=red!60!] (AB) to (AC);
\draw[bend left=10,color=red!60!] (BA) to (BB);
\draw[bend left=10,color=red!60!] (BB) to (BC);
\draw[bend left=10,color=red!60!] (CA) to (CB);
\draw[bend left=10,color=red!60!] (CB) to (CC);

\draw[bend right=15,color=red!60!,dotted,thick] (AA) to (BB);
\draw[bend right=15,color=red!60!,dotted,thick] (BB) to (CC);
\draw[bend right=15,color=red!60!,dotted,thick] (BA) to (CB);
\draw[bend right=15,color=red!60!,dotted,thick] (AB) to (BC);

\draw[color=red!60!,dashed] (AA) to (BA);
\draw[color=red!60!,dashed] (BA) to (CA);
\draw[color=red!60!,dashed] (AB) to (BB);
\draw[color=red!60!,dashed] (BB) to (CB);
\draw[color=red!60!,dashed] (AC) to (BC);
\draw[color=red!60!,dashed] (BC) to (CC);

\node at (AA) {\Large{$\cdot$}};
\node at (AB) {\Large{$\cdot$}};
\node at (AC) {\Large{$\cdot$}};
\node at (BA) {\Large{$\cdot$}};
\node at (BB) {\Large{$\cdot$}};
\node at (BC) {\Large{$\cdot$}};
\node at (CA) {\Large{$\cdot$}};
\node at (CB) {\Large{$\cdot$}};
\node at (CC) {\Large{$\cdot$}};

\end{tikzpicture}
\end{center}
\caption{A triangulation of the torus with one marked point}
\label{Figure:TriangulationMarkov}
\end{figure}

\begin{prop} 
\label{MutationClassMarkov}
For every $k\in\{1,2,3\}$ we have $\mu_1(Q)\simeq\mu_2(Q)\simeq\mu_3(Q)\simeq Q$. Especially, the mutation equivalence class of $Q$ is a singleton.
\end{prop}

Let $\mathbf{x}=(x_1,x_2,x_3)$ be the initial cluster, so that the cluster variables $x_1,x_2,x_3$ correspond to the arcs $1,2,3$. Assume that the seed $(R,\mathbf{y})$ is mutation equivalent to $(Q,\mathbf{x})$. Proposition \ref{MutationClassMarkov} implies that the mutation $\mu_i$ (with $1\leq i\leq 3$) exchanges the cluster variable $y_i$ with the cluster variable $y_i'$ given by equations
\begin{align*}
&y_1y_1'=y_2^2+y_3^2,\\
&y_2y_2'=y_3^2+y_1^2,\\
&y_3y_3'=y_1^2+y_2^2.
\end{align*}
We can extend the $\mu_i$ to functions $(\mathcal{F}^{\times})^3\to(\mathcal{F}^{\times})^3$ by setting 
\begin{align*}
\mu_1(a,b,c)=(\tfrac{b^2+c^2}{a},b,c),&&\mu_2(a,b,c)=(a,\tfrac{c^2+a^2}{b},c),&&\mu_3(a,b,c)=(a,b,\tfrac{a^2+b^2}{c})
\end{align*}
for all $(a,b,c)\in(\mathcal{F}^{\times})^3$. Note that the functions $\mu_i$ are involutions. 

Grabowski's grading equation $v^TB=0$ admits a unique solution $v^T=(1,1,1)$ up to a scalar multiple. By Proposition \ref{MutationClassMarkov}, the vector $v$ is a unique solution to the grading equation for every signed adjacency matrix in a seed of $\mathcal{A}(Q,\mathbf{x})$. That means, $\mathcal{A}(Q,\mathbf{x})$ is a graded algebra when we define the degree of every cluster variable to be equal to $1$. For example, the above exchange relations are homogeneous of degree $2$. Especially, $\mathcal{A}(Q,\mathbf{x})$ is a \textit{positively} graded algebra.

The next proposition concerns an invariant of the mutation rule. Define a map $T\colon(\mathcal{F}^{\times})^3\to\mathcal{F}$ by 
\begin{align*}
T(a,b,c)=\frac{a^2+b^2+c^2}{abc}
\end{align*}for all $(a,b,c)\in(\mathcal{F}^{\times})^3$.  

\begin{prop}
\label{Invariant}
We have $(T\circ\mu_i)(a,b,c)=T(a,b,c)$ for all $i\in\{1,2,3\}$ and $a,b,c\in\mathcal{F}^{\times}$. 
\end{prop}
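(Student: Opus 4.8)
The plan is to verify the invariance directly by substituting each mutation into the formula for $T$ and checking that the quotient is unchanged. Since the three mutations $\mu_1,\mu_2,\mu_3$ are symmetric under cyclic permutation of the coordinates $(a,b,c)$, and the map $T(a,b,c)=(a^2+b^2+c^2)/(abc)$ is itself symmetric in its three arguments, it suffices to carry out the computation for a single index, say $i=1$; the cases $i=2$ and $i=3$ then follow by relabelling.

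First I would compute $T(\mu_1(a,b,c))=T\bigl(\tfrac{b^2+c^2}{a},b,c\bigr)$. Writing $a'=\tfrac{b^2+c^2}{a}$, the numerator becomes $(a')^2+b^2+c^2$ and the denominator becomes $a'bc$. The key observation is that $a'$ satisfies the exchange relation $aa'=b^2+c^2$, so that $a'=\tfrac{b^2+c^2}{a}$ and hence $(a')^2=\tfrac{(b^2+c^2)^2}{a^2}$. I would then clear denominators: multiply numerator and denominator of $T(\mu_1(a,b,c))$ by $a$, using $a\cdot a'=b^2+c^2$ in the denominator and $a\cdot (a')^2 = a'\,(b^2+c^2)$ together with $a(b^2+c^2)$ in the numerator, so that
\begin{align*}
T(\mu_1(a,b,c))=\frac{(a')^2+b^2+c^2}{a'bc}=\frac{a(a')^2+a(b^2+c^2)}{a\,a'bc}=\frac{a'(b^2+c^2)+a(b^2+c^2)}{(b^2+c^2)bc}.
\end{align*}
Factoring $b^2+c^2$ out of the numerator and cancelling it against the same factor in the denominator leaves $\tfrac{a'+a}{bc}$, and substituting $a'=\tfrac{b^2+c^2}{a}$ gives $a'+a=\tfrac{b^2+c^2+a^2}{a}$, whence the expression collapses to $\tfrac{a^2+b^2+c^2}{abc}=T(a,b,c)$, as desired.

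This argument is purely computational and I do not anticipate a genuine obstacle; the only point requiring mild care is the bookkeeping of which factor of $a$ multiplies which term when clearing denominators, since the numerator contains $(a')^2$ (quadratic in $a'$) while the denominator contains $a'$ only linearly. The clean cancellation hinges entirely on recognizing that the exchange relation $aa'=b^2+c^2$ lets one replace the awkward quantity $(a')^2$ by $a'(b^2+c^2)/a$ and thereby expose the common factor $b^2+c^2$. Once that factor is cancelled the identity is transparent. Finally, I would remark that the cases $i=2,3$ follow immediately from the cyclic symmetry already noted, so no separate computation is needed.
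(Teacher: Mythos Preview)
Your proof is correct. The paper takes a slightly different, more conceptual route: rather than simplifying the rational expression directly, it fixes $T=T(a,b,c)$ and observes that $a$ is a root of the quadratic $\lambda^{2}-bcT\lambda+b^{2}+c^{2}\in\mathcal{F}[\lambda]$; Vieta's formula then shows that $a'=(b^{2}+c^{2})/a$ is the other root, so $T(a',b,c)=T$ is immediate. This viewpoint pays off later in the paper, where the mutation rule is reinterpreted as Vieta jumping (replacing the product relation $aa'=b^{2}+c^{2}$ by the sum relation $a+a'=bcT$) in the proof of Markov's theorem and its variant. Your direct computation is equally valid and arguably more elementary; it just does not foreground the Vieta-jumping structure that the paper exploits in the subsequent arguments.
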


\begin{proof}
Without loss of generality we may assume $i=1$. Let $a,b,c\in\mathcal{F}$ be non-zero elements. Put $T=T(a,b,c)$. By assumption, $a$ is a zero of the polynomial $\lambda^2-bcT\lambda+b^2+c^2\in\mathcal{F}[\lambda]$. By Vieta's formula, $a'=\tfrac{b^2+c^2}{a}$ is another zero of the polynomial, so that $T(a',b,c)=T$.
\end{proof}

The proof shows that the mutation rule $aa'=b^2+c^2$ may be seen as Vieta's formula. It may be replaced by Vieta's formula $a+a'=bcT$. Especially, if $a,b,c,T\in\mathbb{N}\subseteq\mathcal{F}$ are positive integers, then so is $a'$. 

Let $\mathbf{x}=(x_1,x_2,x_3)$ be the initial cluster associated with the triangulation $\mathcal{T}$. Propositions \ref{MutationClassMarkov} and \ref{Invariant} imply that $T$ is invariant under mutation. That is, if $(R,\mathbf{y})$ is another seed of $\mathcal{A}(Q,\mathbf{x})$, then $T(\mathbf{x})=T(\mathbf{y})$. Berenstein-Fomin-Zelevinsky \cite{BFZ} use the element $T_0=T(\mathbf{x})$ and the grading to show that $\mathcal{A}(Q,\mathbf{x})\neq\overline{\mathcal{A}}(Q,\mathbf{x})$. In fact, $T_0\in \overline{\mathcal{A}}(Q,\mathbf{x})\backslash \mathcal{A}(Q,\mathbf{x})$, because $T_0=T(\mathbf{y})$ is a Laurent polynomial every cluster $\mathbf{y}$, but is not contained in the positively graded algebra $\mathcal{A}(Q,\mathbf{x})$ for degree reasons. Moreover, Keller \cite{Ke} uses the grading together with Cerulli--Irelli-Keller-Labardini--Fragoso-Plamondon's \cite{CKLP} linear independence of cluster monomials to show that $\mathcal{A}(Q,\mathbf{x})$ is not noetherian.

The \textit{Markov equation} asks for all triples $(a,b,c)$ of positive integers with $T(a,b,c)=3$. Equivalently, the Markov equation is the Diophantine equation $a^2+b^2+c^2=3abc$. The solution $(a,b,c)=(1,1,1)$ is called the \textit{fundamental solution}. Markov's theorem asserts all solutions can be obtained from the fundamental solution by a sequence of mutations.

\begin{theorem}[Markov]
Assume that $a,b,c$ are positive integers with $T(a,b,c)=3$. There exists a sequence $(k_1,\ldots,k_r)\in\{1,2,3\}^r$ of length $r\geq 0$ such that $(a,b,c)=(\mu_{k_r}\circ\ldots\circ\mu_{k_1})(1,1,1)$. 
\end{theorem}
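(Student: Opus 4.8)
The plan is to prove Markov's theorem by a standard descent argument, showing that every positive solution can be reduced to the fundamental solution $(1,1,1)$ via mutations, and then invoking the involutivity of the maps $\mu_i$ to run the reduction backwards. Since each $\mu_i$ is an involution preserving both positivity (by the remark following Proposition \ref{Invariant}, as $T=3\in\mathbb{N}$) and the value $T=3$, it suffices to show that from any solution other than $(1,1,1)$ there is a mutation strictly decreasing the ``size'' $a+b+c$ of the triple; iterating must terminate, and the only triple from which no mutation decreases the size will turn out to be $(1,1,1)$.

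First I would fix a solution $(a,b,c)$ with $a^2+b^2+c^2=3abc$ and, after reordering, assume $a\geq b\geq c$. The key computation is to examine the effect of $\mu_1$, which replaces $a$ by $a'=\frac{b^2+c^2}{a}=3bc-a$ (using the alternative Vieta form $a+a'=bcT=3bc$). I would compare $a'$ with $a$: the sign of $a'-a=3bc-2a$ is what controls the descent. The heart of the matter is the inequality showing that when $a$ is the strict maximum, $\mu_1$ genuinely decreases $a$, i.e.\ $a'<a$. Concretely, from $a^2+b^2+c^2=3abc$ one solves the quadratic in $a$ and checks, using $a\geq b\geq c\geq 1$, that $3bc-a<a$, equivalently $a>\tfrac{3}{2}bc$; I expect this to follow from the observation that $a=\tfrac{3bc+\sqrt{9b^2c^2-4(b^2+c^2)}}{2}$ is the larger root and that the smaller root $a'$ is bounded above by $bc\leq a$ unless we are in a degenerate boundary case.

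The main obstacle is the careful analysis of the boundary cases where the descent stalls, namely when $a=b$ or more delicately when $b=c$, and pinning down exactly which triples are irreducible. I would argue that if no single mutation decreases $a+b+c$, then in particular $3bc-a\geq a$, i.e.\ $2a\leq 3bc$, while simultaneously $a\geq b\geq c$; feeding these back into the equation $a^2+b^2+c^2=3abc$ should force $b=c$ and then $a=b=c$, and finally the equation $3a^2=3a^3$ gives $a=1$. The slightly subtle point is handling the case $a=b$ (where the maximum is not strict), but here one checks directly that $\mu_3$, which fixes $a,b$ and sends $c\mapsto\frac{a^2+b^2}{c}$, produces a valid descent unless $c$ already equals the common smaller value; a short case check disposes of this.

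Once the descent is established, I would conclude formally as follows. Given any solution $(a,b,c)\neq(1,1,1)$, repeatedly apply the size-decreasing mutation; since $a+b+c$ is a strictly decreasing sequence of positive integers, the process terminates after finitely many steps, and the terminal triple admits no descent, hence equals $(1,1,1)$. Writing the sequence of mutations used as $\mu_{k_1},\ldots,\mu_{k_r}$ so that $(\mu_{k_r}\circ\cdots\circ\mu_{k_1})(a,b,c)=(1,1,1)$, I would apply the involutions in reverse: since each $\mu_{k_j}$ is its own inverse, $(a,b,c)=(\mu_{k_1}\circ\cdots\circ\mu_{k_r})(1,1,1)$, which is exactly the asserted form after relabeling the index sequence. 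A minor bookkeeping point is that reordering the coordinates of the triple (to arrange $a\geq b\geq c$) corresponds to the isomorphisms built into the mutation-equivalence relation, so it does not affect the statement, which is symmetric in the roles of the three mutations.
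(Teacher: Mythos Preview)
Your descent argument via Vieta jumping is essentially the paper's proof; the paper inducts on $\max(a,b,c)$ rather than on $a+b+c$, but this is immaterial. The one place where your sketch is looser than the paper is the justification of $a'<a$: you assert that $a$ must be the larger root of $\lambda^2-3bc\lambda+b^2+c^2$ and that the smaller root satisfies $a'\leq bc\leq a$, but neither claim is argued (and $bc\leq a$ is not obvious from $a\geq b\geq c$ alone). The paper's cleaner route is to dispose of $(b,c)=(1,1)$ by hand (forcing $a\in\{1,2\}$) and then, for $(b,c)\neq(1,1)$, to evaluate $f(b)=2b^2(1-c)+c(c-b^2)<0$ directly; since $a,a'$ are the two roots of $f$, this places $b$ strictly between them, giving $a'<b<a$ in one stroke and showing the maximum drops from $a$ to $b$. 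Your boundary case $a=b$ is indeed harmless: the equation then forces $a=b=c=1$, so no separate $\mu_3$-descent is needed.
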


\begin{proof}
We prove the theorem by induction on the maximum $m$ of $a,b,c$. The claim is true for $m=1$. Assume that $m>1$. Without loss of generality we may assume $a\geq b\geq c$. We consider $(a',b,c)=\mu_1(a,b,c)$. Note that $a'$ is a positive integer. It is useful to consider the polynomial $f=\lambda^2-3bc\lambda+b^2+c^2\in\mathcal{F}[\lambda]$. Note that $a,a'$ are zeros of $f$.

It is easy to see that $(a,b,c)=(2,1,1)=\mu_1(1,1,1)$ is the only solution with $b=c=1$ except for the fundamental solution. So we may assume that $(b,c)\neq(1,1)$. In this case, we have $f(b)=2b^2+c^2-3b^2c=2b^2(1-c)+c(c-b^2)<0$. Especially, $b$ must lie between the zeros of the quadratic polynomial $f$. We conclude that $a>b$ and $b>a'$. Especially, the largest entry in the triple $(a,b,c)$, namely $a$, is larger than the maximum of the triple $(a',b,c)$, namely $b$. By induction hypothesis, we can find a a sequence $(k_1,\ldots,k_r)\in\{1,2,3\}^r$ such that $(a',b,c)=(\mu_{k_r}\circ\ldots\circ\mu_{k_1})(1,1,1)$. The concatenation with $\mu_1$ yields a sequence for the triple $(a,b,c)$.
\end{proof}

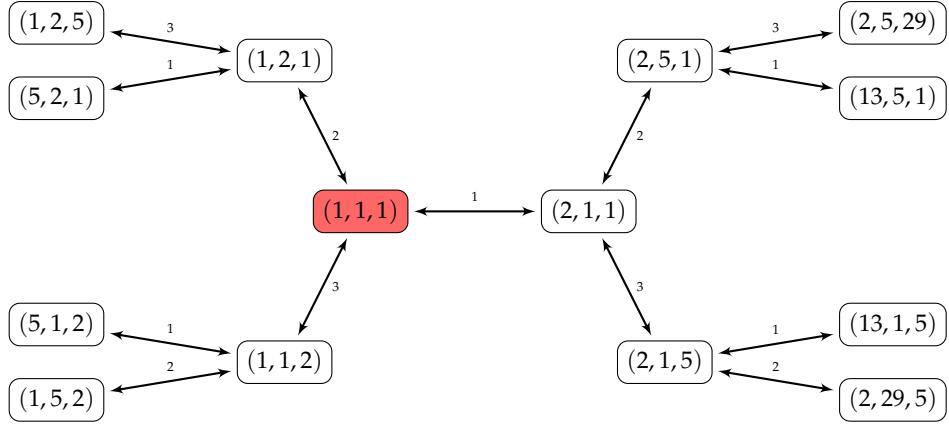
\begin{figure}[h!]
\begin{center}
\begin{tikzpicture}
\node[fill=red!60!,rectangle,rounded corners,draw] (1) at (0,0) {\small{$(1,1,1)$}};
\node[rectangle,rounded corners,draw] (2) at (3,0) {\small{$(2,1,1)$}};
\node[rectangle,rounded corners,draw] (3) at (-1,2) {\small{$(1,2,1)$}};
\node[rectangle,rounded corners,draw] (4) at (-1,-2) {\small{$(1,1,2)$}};
\node[rectangle,rounded corners,draw] (5) at (-4,1.5) {\small{$(5,2,1)$}};
\node[rectangle,rounded corners,draw] (6) at (-4,-1.5) {\small{$(5,1,2)$}};
\node[rectangle,rounded corners,draw] (7) at (-4,2.5) {\small{$(1,2,5)$}};
\node[rectangle,rounded corners,draw] (8) at (-4,-2.5) {\small{$(1,5,2)$}};
\node[rectangle,rounded corners,draw] (9) at (4,2) {\small{$(2,5,1)$}};
\node[rectangle,rounded corners,draw] (10) at (4,-2) {\small{$(2,1,5)$}};
\node[rectangle,rounded corners,draw] (11) at (7,1.5) {\small{$(13,5,1)$}};
\node[rectangle,rounded corners,draw] (12) at (7,2.5) {\small{$(2,5,29)$}};
\node[rectangle,rounded corners,draw] (13) at (7,-1.5) {\small{$(13,1,5)$}};
\node[rectangle,rounded corners,draw] (14) at (7,-2.5) {\small{$(2,29,5)$}};

\draw[<->, >=latex', shorten >=2pt, shorten <=2pt,thick,above] (1) to node {\tiny{$1$}} (2);
\draw[<->, >=latex', shorten >=2pt, shorten <=2pt,thick,right] (1) to node {\tiny{$2$}} (3);
\draw[<->, >=latex', shorten >=2pt, shorten <=2pt,thick,right] (1) to node {\tiny{$3$}} (4);
\draw[<->, >=latex', shorten >=2pt, shorten <=2pt,thick,above] (5) to node {\tiny{$1$}} (3);
\draw[<->, >=latex', shorten >=2pt, shorten <=2pt,thick,above] (6) to node {\tiny{$1$}} (4);
\draw[<->, >=latex', shorten >=2pt, shorten <=2pt,thick,above] (7) to node {\tiny{$3$}} (3);
\draw[<->, >=latex', shorten >=2pt, shorten <=2pt,thick,above] (8) to node {\tiny{$2$}} (4);
\draw[<->, >=latex', shorten >=2pt, shorten <=2pt,thick,right] (9) to node {\tiny{$2$}} (2);
\draw[<->, >=latex', shorten >=2pt, shorten <=2pt,thick,right] (10) to node {\tiny{$3$}} (2);
\draw[<->, >=latex', shorten >=2pt, shorten <=2pt,thick,above] (11) to node {\tiny{$1$}} (9);
\draw[<->, >=latex', shorten >=2pt, shorten <=2pt,thick,above] (12) to node {\tiny{$3$}} (9);
\draw[<->, >=latex', shorten >=2pt, shorten <=2pt,thick,above] (13) to node {\tiny{$1$}} (10);
\draw[<->, >=latex', shorten >=2pt, shorten <=2pt,thick,above] (14) to node {\tiny{$2$}} (10);

\end{tikzpicture}
\end{center}
\caption{Solutions of the Markov equation}
\label{SolutionsMarkov}
\end{figure}

Figure \ref{SolutionsMarkov} illustrates the mutation process. The Markov equation has a long and colorful history. In the context of cluster algebras, Beineke-Br\"ustle-Hille \cite{BBH} use the Markov equation to classify quivers with $3$ vertices of finite mutation type. The \textit{uniqueness conjecture} (c.\,f. Aigner \cite{A}) asserts that if $(a,b,c)$ and $(a,b',c')$ are two solutions of $T(a,b,c)=3$ with $a\geq b \geq c$ and $a  \geq b' \geq c'$, then $b=b'$ and $c=c'$.

\subsection{A variant}
\label{SectionVariant}

In this section we wish to study a variant of the Markov equation, which arises from the cluster algebra of a mutation-finite, non skew-symmetric matrix $B$. Felikson-Shapiro-Tumarkin \cite{FST2} have classified matrices of finite mutation type. A particular example is the $3\times 3$ matrix
\begin{align*}
B=\left(\begin{matrix}0&1&-1\\-4&0&2\\4&-2&0\end{matrix}\right).
\end{align*}
Indeed, it is easy to verify the following proposition.

\begin{prop} 
\label{MutationClassB}
For every $k\in\{1,2,3\}$ we have $\mu_1(B)=\mu_2(B)=\mu_3(B)= -B$. Especially, the mutation equivalence class of $B$ is equal to $\{B,-B\}$.
\end{prop}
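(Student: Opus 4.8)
The plan is to verify the proposition by direct computation using the mutation formula, exploiting the small size of the matrix. Since the claim is that $\mu_k(B) = -B$ for each $k \in \{1,2,3\}$, and mutation is involutive, it suffices to compute each mutation once and check equality with $-B$; the statement about the mutation equivalence class then follows immediately, because applying any mutation to $B$ yields $-B$, and by symmetry (or another direct check) applying any mutation to $-B$ returns $B$, so no new matrices can ever appear.

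Concretely, I would fix $k$ and apply the formula
\begin{align*}
b'_{ij}=\begin{cases}-b_{ij},&\textrm{if } k\in\{i,j\};\\ b_{ij}+\tfrac{1}{2}(b_{ik}\vert b_{kj}\vert+\vert b_{ik}\vert b_{kj}), &\textrm{if } k\notin\{i,j\}.\end{cases}
\end{align*}
The diagonal entries are always zero and the rows/columns indexed by $k$ simply flip sign, matching $-B$ automatically in those positions. The only substantive check is the single off-diagonal entry $b'_{ij}$ with $\{i,j\} = \{1,2,3\}\setminus\{k\}$ (together with its transpose, which is handled by skew-symmetrizability). For instance, taking $k=1$, I must confirm that
\begin{align*}
b'_{23}=b_{23}+\tfrac{1}{2}\bigl(b_{21}\vert b_{13}\vert+\vert b_{21}\vert b_{13}\bigr)=2+\tfrac{1}{2}\bigl((-4)(1)+(4)(-1)\bigr)=2-4=-2=-b_{23},
\end{align*}
and similarly for $k=2$ one checks $b'_{13}$ and for $k=3$ one checks $b'_{12}$. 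Each is a one-line arithmetic verification, and in every case the new entry comes out to the negative of the original, so $\mu_k(B) = -B$.

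I do not anticipate any real obstacle here: the matrix is skew-symmetrizable (one readily finds the diagonal skew-symmetrizer $D = \operatorname{diag}(4,1,1)$, so the mutation formula applies and produces a skew-symmetrizable matrix with the same skew-symmetrizer), and the computation is entirely mechanical. The only thing to keep in mind is that $B$ is not skew-symmetric, so unlike the Markov case one cannot shortcut via quiver isomorphisms and must genuinely evaluate the signed formula with the correct signs on $b_{ik}$ and $b_{kj}$. Once the three off-diagonal entries are checked, the conclusion $\{B,-B\}$ for the mutation equivalence class is immediate, since $-B = \mu_k(B)$ and $\mu_k(-B) = \mu_k(\mu_k(B)) = B$ by involutivity.
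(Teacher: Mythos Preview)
Your proposal is correct and is exactly the direct verification the paper intends: the paper gives no proof beyond the remark ``it is easy to verify the following proposition,'' and your entrywise computation via the mutation formula (together with involutivity for the second claim) is precisely that verification.
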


Let $\mathbf{x}=(x_1,x_2,x_3)$ be the initial cluster. Assume that the seed $(R,\mathbf{y})$ is mutation equivalent to $(Q,\mathbf{x})$. Proposition \ref{MutationClassB} implies that the mutation $\mu_i$ (with $1\leq i\leq 3$) exchanges the cluster variable $y_i$ with the cluster variable $y_i'$ given by equations
\begin{align*}
&y_1y_1'=y_2^4+y_3^4,\\
&y_2y_2'=y_1+y_3^2,\\
&y_3y_3'=y_1+y_2^2.
\end{align*}
We can extend the $\mu_i$ to functions $(\mathcal{F}^{\times})^3\to(\mathcal{F}^{\times})^3$ by setting 
\begin{align*}
\mu_1(a,b,c)=(\tfrac{b^4+c^4}{a},b,c),&&\mu_2(a,b,c)=(a,\tfrac{a+c^2}{b},c),&&\mu_3(a,b,c)=(a,b,\tfrac{a+b^2}{c})
\end{align*}
for all $(a,b,c)\in(\mathcal{F}^{\times})^3$. Note that the functions $\mu_i$ are involutions.

It is easy to see that $v^T=(2,1,1)$ is the unique (up to a scalar multiple) solution to Grabowski's grading equation $v^TB=0$. By Proposition \ref{MutationClassB}, it is a unique solution to the grading equation for every signed adjacency matrix in a seed of $\mathcal{A}(B,\mathbf{x})$. That means, $\mathcal{A}(B,\mathbf{x})$ is a graded algebra where the degree of every cluster variable is equal to either $1$ or $2$. For example, the above exchange relations are homogeneous of degrees $4$, $2$ and $2$. Especially, $\mathcal{A}(Q,\mathbf{x})$ is a \textit{positively} graded algebra.

Again, we can find an invariant of the mutation rule. Define a map $T\colon(\mathcal{F}^{\times})^3\to\mathcal{F}$ by 
\begin{align*}
T(a,b,c)=\frac{a^2+b^4+c^4+2ab^2+2ac^2}{ab^2c^2}
\end{align*}for all $(a,b,c)\in(\mathcal{F}^{\times})^3$.  

\begin{prop}
\label{InvariantB}
We have $(T\circ\mu_i)(a,b,c)=T(a,b,c)$ for all $i\in\{1,2,3\}$ and $a,b,c\in\mathcal{F}^{\times}$. 
\end{prop}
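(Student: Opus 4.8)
The plan is to reproduce the Vieta-jumping mechanism behind Proposition \ref{Invariant}, the point being that each mutation rule expresses the \emph{product} of the two roots of a suitable quadratic, so that the old and new variables yield the same value of $T$. First I would record two reductions. The function $T$ is symmetric under the exchange $b\leftrightarrow c$, and a direct check shows $\mu_3=s\circ\mu_2\circ s$, where $s(a,b,c)=(a,c,b)$; since $T\circ s=T$, invariance under $\mu_3$ is a formal consequence of invariance under $\mu_2$. Hence it suffices to treat $i=1$ and $i=2$.

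For $i=1$ I would fix $b,c$, set $T=T(a,b,c)$, and clear denominators to see that $a$ is a root of
\[
f_1(\lambda)=\lambda^2+(2b^2+2c^2-Tb^2c^2)\lambda+(b^4+c^4)\in\mathcal{F}[\lambda].
\]
By Vieta's formula the product of the two roots of $f_1$ is the constant term $b^4+c^4$, which is exactly $a\,a'$ with $a'=\tfrac{b^4+c^4}{a}$. Thus $a'$ is the second root of $f_1$ and produces the same $T$, i.e. $T(a',b,c)=T(a,b,c)$. For $i=2$ the variable $b$ enters $T$ only through $b^2$, so I would substitute $u=b^2$ and write $T(a,b,c)=\tilde T(a,b^2,c)$ with $\tilde T(a,u,c)=\tfrac{a^2+u^2+c^4+2au+2ac^2}{auc^2}$. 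Fixing $a,c$ and $T$, the quantity $u=b^2$ is then a root of $g(\nu)=\nu^2+(2a-Tac^2)\nu+(a^2+c^4+2ac^2)$, whose constant term factors as the perfect square $(a+c^2)^2$. Since $b'=\tfrac{a+c^2}{b}$ satisfies $(bb')^2=(a+c^2)^2$, Vieta's formula shows that the second root of $g$ is $(b')^2$, whence $\tilde T(a,(b')^2,c)=T$, that is $T(a,b',c)=T(a,b,c)$.

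The only computational content that is not a verbatim copy of the Markov argument is the factorization $a^2+2ac^2+c^4=(a+c^2)^2$ governing the constant term of $g$; this is precisely the role of the cross terms $2ab^2$ and $2ac^2$ in the numerator of $T$, which are engineered so that after the substitution $u=b^2$ the mutated product $bb'=a+c^2$ reappears squared as the product of the two roots. I expect this choice of substitution, together with spotting the perfect square, to be the only (mild) obstacle; once they are in place, Vieta's formula closes each of the three cases immediately.
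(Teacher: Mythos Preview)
Your proposal is correct and follows essentially the same approach as the paper. The paper handles $i=1$ with the identical quadratic $\lambda^2+(2b^2+2c^2-b^2c^2T)\lambda+b^4+c^4$, and for $i=2$ it writes the biquadratic $\lambda^4+(2-c^2T)a\lambda^2+(a+c^2)^2$, which is exactly your quadratic $g$ after the substitution $\nu=\lambda^2$; it then dispatches $i=3$ with ``proved similarly,'' where you instead invoke the $b\leftrightarrow c$ symmetry explicitly.
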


\begin{proof}
Let $a,b,c\in\mathcal{F}$ be non-zero elements. Put $T=T(a,b,c)$. First assume that $i=1$. By assumption, $a$ is a zero of the quadratic polynomial $\lambda^2+(2b^2+2c^2-b^2c^2T)\lambda+b^4+c^4\in\mathcal{F}[\lambda]$. By Vieta's formula, $a'=\tfrac{b^4+c^4}{a}$ is another zero of the polynomial, so that $T(a',b,c)=T$.

Now assume that $i=2$. By assumption, $b$ is a zero of the biquadratic polynomial $\lambda^4+(2-c^2T)a\lambda^2+(a+c^2)^2\in\mathcal{F}[\lambda]$. By Vieta's formula, $b'=\tfrac{a+c^2}{b}$ is another zero of the polynomial, so that $T(a',b,c)=T$. The case $i=3$ is proved similarly.
\end{proof}

The proof shows that the mutation $aa'=b^4+c^4$ may be seen as Vieta's formula. It may be replaced by Vieta's formula $a+a'=b^2c^2T-2b^2-2c^2$. Especially, if $a,b,c,T\in\mathbb{N}\subseteq\mathcal{F}$ are positive integers, then so is $a'$. The same statement is true for $b'$ and $c'$ as well: $T\in\mathbb{Z}$ implies $b^2\mid (a+c^2)^2$ so that $b\mid a+c^2$ in this case. 

Propositions \ref{MutationClassB} and \ref{InvariantB} imply that $T$ is invariant under mutation. That is, if $(R,\mathbf{y})$ is another seed of $\mathcal{A}(Q,\mathbf{x})$, then $T(\mathbf{x})=T(\mathbf{y})$. Especially, we have $T_0=T(\mathbf{x})\in\overline{\mathcal{A}}(B,\mathbf{x})$. As before, the positive grading of $\mathcal{A}(B,\mathbf{x})$ shows that $T_0\notin \mathcal{A}(B,\mathbf{x})$, because $T_0ab^2c^2$ can not be homogeneous of degree $4$. Especially, $\mathcal{A}(B,\mathbf{x})\neq\overline{\mathcal{A}}(B,\mathbf{x})$ is not equal to its upper cluster algebra. Moreover, using Gross-Hacking-Keel-Kontsevich's linear independence of cluster monomials \cite{CKLP} and the same arguments as in Keller's proof for the Markov cluster algebra \cite{Ke}, one can show that $\mathcal{A}(B,\mathbf{x})$ is not noetherian.

Note that $T(1,1,1)=7$. The next theorem describes all triples $(a,b,c)\in\mathbf{N}^3$ of natural numbers with $T(a,b,c)=7$. Equivalently, we solve the Diophantine equation $a^2+b^4+c^4+2ab^2+2ac^2=7ab^2c^2$.

\begin{theorem}
Assume that $a,b,c$ are positive integers with $T(a,b,c)=7$. Then there exists a sequence $(k_1,\ldots,k_r)\in\{1,2,3\}^r$ of length $r\geq 0$ such that $(a,b,c)=(\mu_{k_r}\circ\ldots\circ\mu_{k_1})(1,1,1)$. 
\end{theorem}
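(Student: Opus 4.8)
The plan is to mimic Markov's proof: induct on some measure of the size of the triple $(a,b,c)$, show that every solution other than the fundamental one admits a mutation that strictly decreases this measure, and conclude by the induction hypothesis. The key structural difference from the Markov case is that the three variables now play asymmetric roles --- $a$ has degree $2$ while $b,c$ have degree $1$, and the exchange relations $aa'=b^4+c^4$, $bb'=a+c^2$, $cc'=a+b^2$ are no longer symmetric. So the first thing I would do is fix a natural size measure; the most promising candidate is $\max(a,b^2,c^2)$ or, equivalently, to normalise all entries to the common grading weight and compare $a$ against $b^2$ and $c^2$. Since $\deg a=2$ and $\deg b=\deg c=1$, comparing $a$ with $b^2,c^2$ is the grading-homogeneous analogue of ``largest entry'' in the Markov argument.

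First I would verify the base case: the smallest solutions, computing $T(1,1,1)=7$ and checking by hand the finitely many solutions with all entries small (in particular those with $b=c=1$, which should force $a\in\{1,\dots\}$ via the one-variable equation $a^2 - (7-4)a + 2 = a^2-3a+2=0$, giving $a\in\{1,2\}$). The triple $(2,1,1)=\mu_1(1,1,1)$ should be the analogue of the Markov $(2,1,1)$, settling the boundary of the induction. Then for the inductive step I would assume a solution that is not fundamental and argue that one of the three mutations strictly decreases the measure. The cleanest route is a case analysis driven by which of $a$, $b^2$, $c^2$ is strictly largest: if $a$ is the strict maximum I apply $\mu_1$ and use the Vieta polynomial $\lambda^2+(2b^2+2c^2-7b^2c^2)\lambda+b^4+c^4$ to show its second root $a'=(b^4+c^4)/a$ satisfies $a'<a$ (and is a positive integer by the divisibility remark preceding the theorem); if instead $b^2$ (say) is the strict maximum I apply $\mu_2$ and use the biquadratic $\lambda^4+(2-7c^2)a\lambda^2+(a+c^2)^2$, whose other positive root is $b'=(a+c^2)/b$, to show $(b')^2<b^2$.

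The heart of the matter, and the step I expect to be the main obstacle, is proving the strict-decrease inequalities in each case, i.e. that the ``largest'' variable genuinely shrinks under the corresponding mutation. In the Markov proof this came for free by evaluating the quadratic $f$ at the middle entry and observing $f(b)<0$, which placed $b$ strictly between the two roots and forced $a>b>a'$. Here I would replicate that sign argument but must do it separately for the quadratic (the $\mu_1$ case) and for the biquadratic (the $\mu_2,\mu_3$ cases), and the asymmetry means the convenient inequality $f(\text{middle})<0$ must be re-established with the new coefficients and the value $T=7$ hardwired in. Concretely, in the $\mu_1$ case I would evaluate the quadratic at $\max(b^2,c^2)$ and show it is negative (using that the triple is non-fundamental), and in the $\mu_2$ case evaluate the biquadratic at the relevant competing value; each reduces to a polynomial inequality in two variables that should be provable by factoring out the ``$(x-1)$'' type terms exactly as in the displayed computation $f(b)=2b^2(1-c)+c(c-b^2)<0$ of the Markov proof.

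A secondary subtlety is ensuring the decrease is \emph{strict} and that the measure cannot get stuck --- for instance ruling out a degenerate situation where two of $a,b^2,c^2$ are tied at the maximum, or where a mutation returns an equal-sized triple. I would handle ties by a fixed tie-breaking convention or by checking directly that any solution with a tie is either fundamental or reduces via one of the mutations; this is where the explicit small-solution enumeration from the base case pays off again. Once the strict decrease is established in every non-fundamental case, the induction closes exactly as in Markov's theorem: the decreasing mutation produces a smaller solution, the induction hypothesis supplies a mutation word from $(1,1,1)$ to that smaller solution, and prepending the inverse (equivalently, the same involutive $\mu_i$) yields the desired word for $(a,b,c)$.
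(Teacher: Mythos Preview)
Your plan is correct and is essentially the paper's own argument, up to a change of variables. The paper makes the substitution $(A,B,C)=(a,b^2,c^2)$ explicit, which turns the equation into the purely quadratic $A^2+B^2+C^2+2AB+2AC=7ABC$ and turns your biquadratic in $b$ into a quadratic in $B$; it then isolates the Markov-style descent on $\max(A,B,C)$ (your $\max(a,b^2,c^2)$) as a separate lemma, with the same case split (largest coordinate is $A$ versus $B$) and the same sign computations $f(B)<0$, $f(A)<0$, $f(C)<0$ that you outline. The only extra bookkeeping the paper does is to track that $B,C$ remain perfect squares along the $\tau$-orbit so it can translate back to $(a,b,c)$; in your direct formulation this is unnecessary, since integrality of $b',c'$ is already supplied by the divisibility remark preceding the theorem.
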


To prove the theorem, we introduce another Diophantine equation. We can solve the equation by the same methods that we used to solve the Markov equation. Here, we denote by $\square=\{1,4,9,\ldots\}$ the set of positive perfect squares.

\begin{lemma} For $1\leq i\leq 3$ define maps $\tau_i\colon(\mathcal{F}^{\times})^3\to(\mathcal{F}^{\times})^3$ by 
\begin{align*}
\tau_1(A,B,C)=(\tfrac{B^2+C^2}{A},B,C),&&\tau_2(A,B,C)=(A,\tfrac{(A+C)^2}{B},C),&&\tau_3(A,B,C)=(A,B,\tfrac{(A+B)^2}{C})
\end{align*}
for all $A,B,C\in\mathcal{F}^{\times}$. Assume that $(A,B,C)$ are positive integers such that 
\begin{align}
\label{LemmaEqn}
A^2+B^2+C^2+2AB+2AC=7ABC. 
\end{align}
Then there exists a sequence $(k_1,\ldots,k_r)\in\{1,2,3\}^r$ of length $r\geq 0$ such that $(A,B,C)=(\tau_{k_r}\circ\ldots\circ\tau_{k_1})(1,1,1)$ and $(\tau_{k_j}\circ\ldots\circ\tau_{k_1})(1,1,1)\in\mathbb{N}\times \square^{2}$ for every $1\leq j\leq r$.
\end{lemma}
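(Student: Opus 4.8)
The plan is to run a descent on $N=A+B+C$ that mirrors the proof of Markov's theorem, while carrying along the extra bookkeeping needed to stay inside $\mathbb{N}\times\square^2$. The starting observation is the identity
$$A^2+B^2+C^2+2AB+2AC=(A+B+C)^2-2BC,$$
so that equation~\eqref{LemmaEqn} is equivalent to $(A+B+C)^2=BC(7A+2)$. Reading the same equation as a quadratic in each variable and invoking Vieta's formula (exactly as in Proposition~\ref{InvariantB}) shows that the three jumps act by $A\mapsto A'=\tfrac{B^2+C^2}{A}$ with $A+A'=7BC-2B-2C$, by $B\mapsto B'=\tfrac{(A+C)^2}{B}$ with $B+B'=7AC-2A$, and by $C\mapsto C'=\tfrac{(A+B)^2}{C}$ with $C+C'=7AB-2A$. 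Hence each $\tau_i$ sends a positive integer solution to a positive integer solution (integrality from the Vieta sums, positivity from the product form), and comparing a root with the product of the two roots gives the clean descent criteria
$$\tau_1\text{ lowers }A\iff A^2>B^2+C^2,\qquad \tau_2\text{ lowers }B\iff B>A+C,\qquad \tau_3\text{ lowers }C\iff C>A+B.$$

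First I would prove the descent step: every positive integer solution other than $(1,1,1)$ admits a $\tau_i$ that strictly decreases $N$. Negating the three criteria gives $A^2\le B^2+C^2$, $B\le A+C$ and $C\le A+B$; since $A^2\le B^2+C^2\le(B+C)^2$ forces $A\le B+C$, a solution admitting no descent satisfies all three triangle inequalities. Dividing equation~\eqref{LemmaEqn} by $ABC$ to get $\tfrac{A}{BC}+\tfrac{B}{AC}+\tfrac{C}{AB}+\tfrac{2}{B}+\tfrac{2}{C}=7$ and bounding the three mixed terms by the triangle inequalities then yields $7\le\tfrac{2}{A}+\tfrac{4}{B}+\tfrac{4}{C}$, which is impossible once $A,B,C\ge2$. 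Hence some coordinate equals $1$, and after reducing to the symmetric case $B\ge C$ a short finite check of the resulting quadratics leaves $(1,1,1)$ as the only solution in which no $\tau_i$ descends. This is routine Markov-style inequality juggling, and I expect no genuine difficulty here.

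The new ingredient is keeping the orbit inside $\mathbb{N}\times\square^2$, and I would fold this into the same induction. Assume by strong induction on $N$ that every solution of smaller size lies in $\mathbb{N}\times\square^2$ and is reachable from $(1,1,1)$ through triples all lying in $\mathbb{N}\times\square^2$. Given $(A,B,C)\ne(1,1,1)$, pick a descending $\tau_i$ and let $(A^\ast,B^\ast,C^\ast)$ be the smaller solution; since $\tau_i$ is an involution, $(A,B,C)=\tau_i(A^\ast,B^\ast,C^\ast)$. If $i=1$ then $B,C$ are unchanged and stay squares; if $i=2$ then $C=C^\ast$ is a square and, writing $B^\ast=\beta^2$, the identity $(A^\ast+B^\ast+C^\ast)^2=B^\ast C^\ast(7A^\ast+2)$ gives $\beta^2\mid(A^\ast+B^\ast+C^\ast)^2$, hence $\beta\mid A^\ast+C^\ast$, so $B=\tfrac{(A^\ast+C^\ast)^2}{\beta^2}=\big(\tfrac{A^\ast+C^\ast}{\beta}\big)^2$ is again a perfect square; the case $i=3$ is symmetric. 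Appending $\tau_i$ to the sequence furnished by the induction hypothesis realizes $(A,B,C)$, and the only new partial image is $(A,B,C)$ itself, which we have just shown lies in $\mathbb{N}\times\square^2$.

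The main obstacle is therefore not any single computation but the \emph{coupling} of the two inductive claims: the size descent and the square-preservation must be maintained together, and it is the divisibility $\beta\mid A^\ast+C^\ast$ extracted from $(A+B+C)^2=BC(7A+2)$ that lets the square property survive a jump. Once that divisibility is in hand, the involutivity of $\tau_i$ propagates the square property back up the descent tree for free. This lemma is then precisely what feeds the theorem: substituting $A=a$, $B=b^2$, $C=c^2$ identifies the $\tau_i$-orbit with the $\mu_i$-orbit of the original equation, the constraint $(A,B,C)\in\mathbb{N}\times\square^2$ guaranteeing that each intermediate triple pulls back to an \emph{integral} triple $(a,b,c)$.
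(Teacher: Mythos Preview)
Your argument is correct, and the overall architecture---Vieta jumping plus an inductive square-preservation claim---matches the paper's. The difference lies in how the descent step is organised.

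The paper inducts on $\max(A,B,C)$, splits into the cases $A\ge B$ and $A<B$ (after WLOG $B\ge C$), and in each case evaluates the relevant quadratic at a carefully chosen point (namely $B$, resp.\ $A$ or $C$) to force that point between the two roots; this pins down which $\tau_i$ strictly lowers the maximum. You instead induct on $N=A+B+C$, rewrite the equation as $(A+B+C)^2=BC(7A+2)$, and read off the clean criteria ``$\tau_1$ descends iff $A^2>B^2+C^2$, $\tau_2$ descends iff $B>A+C$, $\tau_3$ descends iff $C>A+B$''. Negating all three gives triangle inequalities, and dividing the equation by $ABC$ then yields $7\le \tfrac{2}{A}+\tfrac{4}{B}+\tfrac{4}{C}$, which kills the case $\min\ge 2$ in one line. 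This avoids the paper's case split and is a bit more conceptual; both approaches still need a short finite check once a coordinate equals $1$. For the square-preservation, both proofs use the same idea: by induction $B^\ast=\beta^2$, and then $B=(A^\ast+C^\ast)^2/\beta^2$ is a square because $\beta^2\mid n^2$ forces $\beta\mid n$. The paper reads this off directly from $BB^\ast=(A^\ast+C^\ast)^2$, whereas you take a small detour through the identity $(A^\ast+B^\ast+C^\ast)^2=B^\ast C^\ast(7A^\ast+2)$; this is correct but unnecessary, since the product relation $BB^\ast=(A^\ast+C^\ast)^2$ already gives $\beta^2\mid(A^\ast+C^\ast)^2$ without invoking the rewritten equation.
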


\begin{proof}[Proof of the lemma]
We prove the statement by induction on the maximum $m$ of $A,B,C$. The claim is true for $m=1$. Assume that $m>2$. Without loss of generality we may assume $B\geq C$.

First assume that $A\geq B$. It is easy to see that $(A,B,C)=(2,1,1)=\tau_1(1,1,1)$ is the only solution of the equation with $(B,C)=(1,1)$ which is different from the solution $(1,1,1)$. So we may assume that $(B,C)\neq(1,1)$. It is useful to consider the polynomial $f=\lambda^2+(2B+2C-7BC)\lambda+B^2+C^2\in\mathcal{F}[\lambda]$. Note that $A$ is a zero of $f$. Let us put $(A',B,C)=\tau_1(A,B,C)$. By construction, $A'$ is positive, and by Vieta's formula, $A'$ is another zero of $f$. Moreover, Vieta's formula $A+A'=7BC-2B-2C$ implies that $A'$ is integer. The assumptions $(B,C)\neq(1,1)$ and $B\geq C$ imply
\begin{align*}
f(B)=4B^2+C^2+2BC-7B^2C=4B^2(1-C)+C(C-B^2)+2BC(1-B)<0,
\end{align*}
so that $B$ must lie between the zeros $A,A'$ of $f$. It follows that $A>B>A'$. Especially, the largest entry in the triple $(A',B,C)$ is strictly smaller than the largest entry in $(A,B,C)$. By induction hypothesis, we can find a a sequence $(k_1,\ldots,k_r)\in\{1,2,3\}^r$ such that $(A',B,C)=(\tau_{k_r}\circ\ldots\circ\tau_{k_1})(1,1,1)$. The concatenation with $\tau_1$ yields a sequence for the triple $(A,B,C)$.

Now assume that $A<B$. It is easy to see that $(A,B,C)=(1,4,1)=\tau_2(1,1,1)$ is the only solution of the equation with $(A,C)=(1,1)$ which is different from the solution $(1,1,1)$. So we may assume $(A,C)\neq(1,1)$. It is useful to consider the polynomial $f=\lambda^2+(2A-7AC)\lambda+(A+C)^2\in\mathcal{F}[\lambda]$. Note that $B$ is a zero of $f$. Let us put $(A,B',C)=\tau_2(A,B,C)$. By construction, $B'$ is positive, and by Vieta's formula, $B'$ is another zero of $f$. Moreover, Vieta's formula $B+B'=7AC-2A$ implies that $B'$ is integer. The assumption $(A,C)\neq(1,1)$ implies
\begin{align*}
&f(A)=4A^2(1-C)+C(C-A^2)+2AC(1-A)<0,&&\textrm{if }C\leq A,\\
&f(C)=2C^2(1-A)+A(A-C^2)+4AC(1-C)<0,&&\textrm{if }A\leq C.
\end{align*}
In the case $C\leq A$, the natural number $A$ must lie between the two zeros of $f$, so that $B'<A<B$. Moreover, $C\leq A$ implies $C<B$, so that in this case $\operatorname{max}(A,B,C)=B>A,B',C$. In the case $A\leq C$, the natural number $C$ must be different from $B$ and must lie between the zeros of $f$, so that $B'<C<B$. Therefore, in this case we also have $\operatorname{max}(A,B,C)=B>A,B',C$. As above, the induction hypothesis implies that we can find a a sequence $(k_1,\ldots,k_r)\in\{1,2,3\}^r$ such that $(A,B',C)=(\mu_{k_r}\circ\ldots\circ\mu_{k_1})(1,1,1)$. Moreover, $B'$ is a perfect square. It follows that $B=(A+C)^2/B'$ is a perfect square and can find a sequence $(k_1,\ldots,k_{r+1})\in\{1,2,3\}^{r+1}$ such that $(A,B,C)=(\tau_{k_{r+1}}\circ\ldots\circ\tau_{k_1})(1,1,1)$ by setting $k_{r+1}=2$.  
\end{proof}

We are ready to prove the theorem.

\begin{proof}[Proof of the theorem]
Suppose that the positive integers $a,b,c$ satisfy $T(a,b,c)=7$. Then the triple $(A,B,C)=(a,b^2,c^2)$ is a solution to Equation \ref{LemmaEqn}. By the lemma, there is a sequence $(k_1,\ldots,k_r)$ of indices such that $(a,b^2,c^2)=(\tau_{k_r}\circ\ldots\circ\tau_{k_1})(1,1,1)$ and $(A_i,B_i,C_i)=(\tau_{k_i}\circ\ldots\circ\tau_{k_1})(1,1,1)\in\mathbb{N}\times\square^{2}$ for every $0\leq i\leq r$. Substitute $A_i=a_i$, $B_i=b_i^2$ and $C_i=c_i^2$ with positive integers $b_i,c_i$ for all $i$. Then $\tau_{k_{i+1}}(a_i,b_i^2,c_i^2)=(a_{i+1},b_{i+1}^2,c_{i+1}^2)$ implies $\mu_{k_{i+1}}(a_i,b_i,c_i)=(a_{i+1},b_{i+1},c_{i+1})$, so that $(a,b,c)=(\mu_{k_r}\circ\ldots\circ\mu_{k_1})(1,1,1)$.\end{proof}

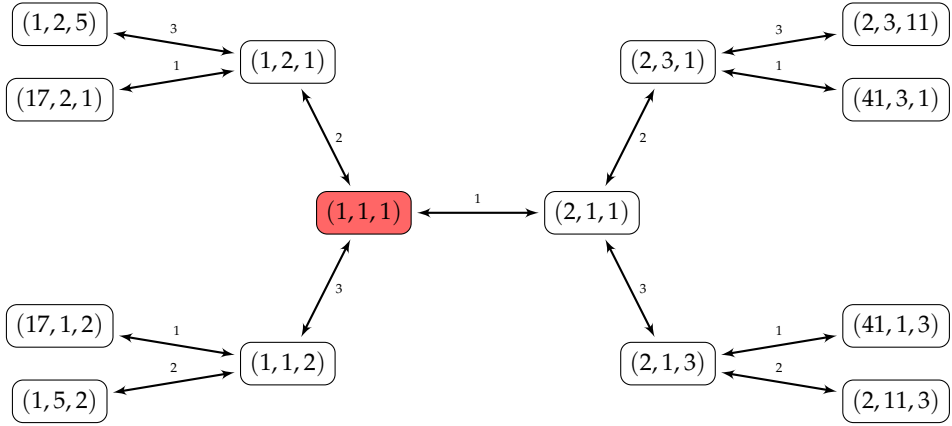
\begin{figure}[h!]
\begin{center}
\begin{tikzpicture}
\node[fill=red!60!,rectangle,rounded corners,draw] (1) at (0,0) {\small{$(1,1,1)$}};
\node[rectangle,rounded corners,draw] (2) at (3,0) {\small{$(2,1,1)$}};
\node[rectangle,rounded corners,draw] (3) at (-1,2) {\small{$(1,2,1)$}};
\node[rectangle,rounded corners,draw] (4) at (-1,-2) {\small{$(1,1,2)$}};
\node[rectangle,rounded corners,draw] (5) at (-4,1.5) {\small{$(17,2,1)$}};
\node[rectangle,rounded corners,draw] (6) at (-4,-1.5) {\small{$(17,1,2)$}};
\node[rectangle,rounded corners,draw] (7) at (-4,2.5) {\small{$(1,2,5)$}};
\node[rectangle,rounded corners,draw] (8) at (-4,-2.5) {\small{$(1,5,2)$}};
\node[rectangle,rounded corners,draw] (9) at (4,2) {\small{$(2,3,1)$}};
\node[rectangle,rounded corners,draw] (10) at (4,-2) {\small{$(2,1,3)$}};
\node[rectangle,rounded corners,draw] (11) at (7,1.5) {\small{$(41,3,1)$}};
\node[rectangle,rounded corners,draw] (12) at (7,2.5) {\small{$(2,3,11)$}};
\node[rectangle,rounded corners,draw] (13) at (7,-1.5) {\small{$(41,1,3)$}};
\node[rectangle,rounded corners,draw] (14) at (7,-2.5) {\small{$(2,11,3)$}};

\draw[<->, >=latex', shorten >=2pt, shorten <=2pt,thick,above] (1) to node {\tiny{$1$}} (2);
\draw[<->, >=latex', shorten >=2pt, shorten <=2pt,thick,right] (1) to node {\tiny{$2$}} (3);
\draw[<->, >=latex', shorten >=2pt, shorten <=2pt,thick,right] (1) to node {\tiny{$3$}} (4);
\draw[<->, >=latex', shorten >=2pt, shorten <=2pt,thick,above] (5) to node {\tiny{$1$}} (3);
\draw[<->, >=latex', shorten >=2pt, shorten <=2pt,thick,above] (6) to node {\tiny{$1$}} (4);
\draw[<->, >=latex', shorten >=2pt, shorten <=2pt,thick,above] (7) to node {\tiny{$3$}} (3);
\draw[<->, >=latex', shorten >=2pt, shorten <=2pt,thick,above] (8) to node {\tiny{$2$}} (4);
\draw[<->, >=latex', shorten >=2pt, shorten <=2pt,thick,right] (9) to node {\tiny{$2$}} (2);
\draw[<->, >=latex', shorten >=2pt, shorten <=2pt,thick,right] (10) to node {\tiny{$3$}} (2);
\draw[<->, >=latex', shorten >=2pt, shorten <=2pt,thick,above] (11) to node {\tiny{$1$}} (9);
\draw[<->, >=latex', shorten >=2pt, shorten <=2pt,thick,above] (12) to node {\tiny{$3$}} (9);
\draw[<->, >=latex', shorten >=2pt, shorten <=2pt,thick,above] (13) to node {\tiny{$1$}} (10);
\draw[<->, >=latex', shorten >=2pt, shorten <=2pt,thick,above] (14) to node {\tiny{$2$}} (10);

\end{tikzpicture}
\end{center}
\caption{Solutions of the Diophantine equation}
\label{SolutionsSkew}
\end{figure}

Figure \ref{SolutionsSkew} illustrates the mutation process. Note that the uniqueness conjecture fails for this Diophantine equation. For example, $(a,b,c)=(41,3,1)$ and $(a',b',c')=(41,14,1)$ are two solutions of $T(a,b,c)=7$ with $a\geq b \geq c$ and $a' \geq b' \geq c'$, but $b\neq b'$.

\subsection{A Diophantine equation from the torus minus a disk}

\label{SectionTorusMinus}
\begin{figure}
\begin{center}
\begin{tikzpicture}

\newcommand\radius{0.3}
\newcommand\grid{1.2}
\newcommand\uber{0.3}

\draw[blue,fill=black!30!] (\grid,\grid-\radius) circle (\radius);
\draw[blue,fill=black!30!] (3*\grid,\grid-\radius) circle (\radius);
\draw[blue,fill=black!30!] (5*\grid,\grid-\radius) circle (\radius);
\draw[blue,fill=black!30!] (\grid,3*\grid-\radius) circle (\radius);
\draw[blue,fill=black!30!] (3*\grid,3*\grid-\radius) circle (\radius);
\draw[blue,fill=black!30!] (5*\grid,3*\grid-\radius) circle (\radius);
\draw[blue,fill=black!30!] (\grid,5*\grid-\radius) circle (\radius);
\draw[blue,fill=black!30!] (3*\grid,5*\grid-\radius) circle (\radius);
\draw[blue,fill=black!30!] (5*\grid,5*\grid-\radius) circle (\radius);

\draw[color=black!30!] (0,0-\uber) to (0,6*\grid+\uber);
\draw[color=black!30!] (2*\grid,0-\uber) to (2*\grid,6*\grid+\uber);
\draw[color=black!30!] (4*\grid,0-\uber) to (4*\grid,6*\grid+\uber);
\draw[color=black!30!] (6*\grid,0-\uber) to (6*\grid,6*\grid+\uber);
\draw[color=black!30!] (0-\uber,0) to (6*\grid+\uber,0);
\draw[color=black!30!] (0-\uber,2*\grid) to (6*\grid+\uber,2*\grid);
\draw[color=black!30!] (0-\uber,4*\grid) to (6*\grid+\uber,4*\grid);
\draw[color=black!30!] (0-\uber,6*\grid) to (6*\grid+\uber,6*\grid);

\coordinate(AA) at (\grid,\grid) {};
\coordinate(AB) at (3*\grid,\grid) {};
\coordinate(AC) at (5*\grid,\grid) {};
\coordinate(BA) at (\grid,3*\grid) {};
\coordinate(BB) at (3*\grid,3*\grid) {};
\coordinate(BC) at (5*\grid,3*\grid) {};
\coordinate(CA) at (\grid,5*\grid) {};
\coordinate(CB) at (3*\grid,5*\grid) {};
\coordinate(CC) at (5*\grid,5*\grid) {};

\draw[bend left=10,color=red!60!] (AA) to (AB);
\draw[bend left=10,color=red!60!] (AB) to (AC);
\draw[bend left=10,color=red!60!] (BA) to (BB);
\draw[bend left=10,color=red!60!] (BB) to (BC);
\draw[bend left=10,color=red!60!] (CA) to (CB);
\draw[bend left=10,color=red!60!] (CB) to (CC);

\draw[bend right=40,color=red!60!,densely dotted,thick] (AA) to (2*\grid,2*\grid);
\draw[bend left=40,color=red!60!,densely dotted,thick] (2*\grid,2*\grid) to (BB);
\draw[bend right=40,color=red!60!,densely dotted,thick] (BB) to (4*\grid,4*\grid);
\draw[bend left=40,color=red!60!,densely dotted,thick] (4*\grid,4*\grid) to (CC);
\draw[bend right=40,color=red!60!,densely dotted,thick] (BA) to (2*\grid,4*\grid);
\draw[bend left=40,color=red!60!,densely dotted,thick] (2*\grid,4*\grid) to (CB);
\draw[bend right=40,color=red!60!,densely dotted,thick] (AB) to (4*\grid,2*\grid);
\draw[bend left=40,color=red!60!,densely dotted,thick] (4*\grid,2*\grid) to (BC);

\draw[bend left=90,color=red!60!,dashed] (AA) to (BA);
\draw[bend left=90,color=red!60!,dashed] (BA) to (CA);
\draw[bend left=90,color=red!60!,dashed] (AB) to (BB);
\draw[bend left=90,color=red!60!,dashed] (BB) to (CB);
\draw[bend left=90,color=red!60!,dashed] (AC) to (BC);
\draw[bend left=90,color=red!60!,dashed] (BC) to (CC);

\draw[bend right=90,color=red!60!,dotted,thick] (AA) to (BA);
\draw[bend right=90,color=red!60!,dotted,thick] (BA) to (CA);
\draw[bend right=90,color=red!60!,dotted,thick] (AB) to (BB);
\draw[bend right=90,color=red!60!,dotted,thick] (BB) to (CB);
\draw[bend right=90,color=red!60!,dotted,thick] (AC) to (BC);
\draw[bend right=90,color=red!60!,dotted,thick] (BC) to (CC);

\node at (AA) {\Large{$\cdot$}};
\node at (AB) {\Large{$\cdot$}};
\node at (AC) {\Large{$\cdot$}};
\node at (BA) {\Large{$\cdot$}};
\node at (BB) {\Large{$\cdot$}};
\node at (BC) {\Large{$\cdot$}};
\node at (CA) {\Large{$\cdot$}};
\node at (CB) {\Large{$\cdot$}};
\node at (CC) {\Large{$\cdot$}};

\end{tikzpicture}
\end{center}
\caption{A triangulations of the torus minus a disk with one marked point on the boundary}
\label{Figure:TriangulationWithBdy}
\end{figure}
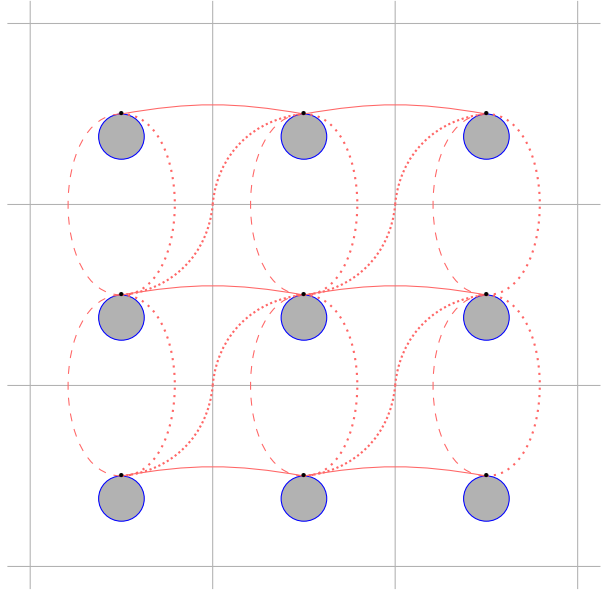

Let $T=S^1\times S^1$ be the torus and $D^2\subset T$ a disk. We consider the bordered surface $\Sigma=T\backslash D^2$. Let $P\in \partial S=\partial D^2$ be a point on the boundary, and put $M=\{P\}$. Then $\mathcal{A}(\Sigma,M)$ is a cluster algebra of rank $4$ with four mutable initial cluster variables $x_1,x_2,x_3,x_4$ and one frozen variable $x_5$. 

We also consider the cluster algebra $\mathcal{A}'(\Sigma,M)$ which is obtained from $\mathcal{A}(\Sigma,M)$ by ignoring the frozen variable. Figure \ref{Figure:TriangulationWithBdy} visualizes a triangulation $\mathcal{T}$ of the bordered surface. Figure \ref{InitialQuiver} displays quivers $Q=Q(\mathcal{T})$ and $Q'$ of initial seeds for $\mathcal{A}(\Sigma,M)$ and $\mathcal{A}'(\Sigma,M)$, respectively. Mutable vertices are colored red and the frozen vertex is colored blue.  

\begin{prop} 
\label{MutationClassTorus}
We have $Q\simeq \mu_1(Q)\simeq \mu_2(Q)\simeq \mu_3(Q)\simeq \mu_4(Q)$. Especially, the mutation equivalence class of $Q$ is a singleton. The same is true for the quiver $Q'$.
\end{prop}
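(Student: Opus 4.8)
The plan is to verify Proposition \ref{MutationClassTorus} by direct computation, exactly as the analogous Propositions \ref{MutationClassMarkov} and \ref{MutationClassB} are proved. The key point is that the proposition is a finite, checkable statement: the quiver $Q$ has only four mutable vertices plus one frozen vertex, so its signed adjacency matrix is a small explicit integer matrix, and I must compute the four mutations $\mu_1(Q),\ldots,\mu_4(Q)$ and exhibit, for each, a permutation $\sigma\in S_5$ (fixing the frozen vertex) realizing $Q\simeq\mu_k(Q)$.

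First I would read off the matrix $B=B(Q)$ from the triangulation in Figure \ref{Figure:TriangulationWithBdy}, using the arrow convention stated in Section \ref{SectionSurface}: each triangle bounded by arcs $\gamma_1,\gamma_2,\gamma_3$ in order contributes arrows $\gamma_1\to\gamma_2\to\gamma_3\to\gamma_1$, and the signed adjacency entry $b_{ij}$ counts net arrows $i\to j$. The torus-minus-a-disk has genus $1$, one boundary component, and one marked point, so the triangulation has four flippable arcs (vertices $1$–$4$) and one boundary arc (vertex $5$, frozen); I would record $B$ as a $5\times 5$ matrix and let $B'$ be its mutable $4\times 4$ principal submatrix, giving $Q'$.

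Next I would apply the mutation formula from Section \ref{SectionRudiments} at each $k\in\{1,2,3,4\}$. The main obstacle — really the only nontrivial part — is the bookkeeping: I must confirm that each mutated matrix $\mu_k(B)$ is isomorphic to $B$ via a permutation, and the correct way to organize this is to observe that $Q$ enjoys a symmetry group acting transitively on its mutable vertices (visible from the periodicity of the triangulation on the universal cover in Figure \ref{Figure:TriangulationWithBdy}). Concretely, I expect the four mutable vertices to be related by a cyclic (or dihedral) symmetry $\rho$ of $Q$, so that it suffices to check $Q\simeq\mu_1(Q)$ for a single index and then propagate via $\mu_k(Q)=\rho^{\,k-1}\mu_1(\rho^{\,1-k}Q)=\rho^{\,k-1}\mu_1(Q)\simeq\mu_1(Q)\simeq Q$. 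This reduces four verifications to essentially one.

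Finally, since the frozen vertex $5$ is not mutated and the permutations $\sigma$ restrict to the identity on $\{5\}$, the same isomorphisms simultaneously witness $Q'\simeq\mu_k(Q')$ after deleting the last row and column; thus the claim for $Q'$ follows from the claim for $Q$ with no extra work. The conclusion that the mutation equivalence class is a singleton is then immediate: every single mutation returns an isomorphic copy, so by induction every finite mutation sequence does as well. I do not anticipate any genuine difficulty beyond writing down $B$ correctly from the figure and tracking the permutations, so I would present the explicit matrix, the symmetry $\rho$, the single computation $\mu_1(B)$, and the witnessing permutation, leaving the remaining three cases to the reader as routine.
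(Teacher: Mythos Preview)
Your overall strategy---direct verification by computing each $\mu_k(B)$ and exhibiting a vertex permutation---is correct and is exactly what the paper does: it simply tabulates, for each $k\in\{1,2,3,4\}$, the bijection on $Q_0=\{1,\ldots,5\}$ (fixing $5$) that witnesses $Q\simeq\mu_k(Q)$, and remarks that deleting the row for vertex $5$ handles $Q'$.

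The gap is your symmetry shortcut. The quiver $Q$ does \emph{not} admit an automorphism acting transitively on the mutable vertices: $1$ and $2$ are joined by a double arrow while $3$ and $4$ are not, so any automorphism must preserve the pair $\{1,2\}$; and the oriented $3$-cycle $3\to 4\to 5\to 3$ then pins down $3$ and $4$ individually, leaving only the identity. (The permutation $(1\,2)(3\,4)$ is merely an isomorphism $Q\to Q^{\mathrm{op}}$, which via $\mu_k(-B)=-\mu_k(B)$ lets you pair $\mu_1$ with $\mu_2$ and $\mu_3$ with $\mu_4$, but not collapse everything to one case.) Concretely, the paper's witnessing permutations are the transposition $(1\,2)$ for both $\mu_1$ and $\mu_2$, but genuine $4$-cycles for $\mu_3$ and $\mu_4$; these cannot be related by an automorphism of $Q$. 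So the reduction to a single computation fails, and you must carry out at least two of the four mutations explicitly rather than one.
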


\begin{proof}
It is easy to check that the following bijections on vertex sets induce isomorphisms of quivers. 
\begin{center}
\begin{tabular}{|c|c|c|c|c|}\hline
$Q_0$&$\mu_1(Q)_0$&$\mu_2(Q)_0$&$\mu_3(Q)_0$&$\mu_4(Q)_0$\\\hline
1&2&2&4&2\\
2&1&1&1&3\\
3&3&3&2&4\\
4&4&4&3&1\\
5&5&5&5&5\\\hline
\end{tabular}
\end{center}
The corresponding table for $Q'$ is the same table with the last row removed.\end{proof}

Let $\mathbf{x}=(x_1,x_2,x_3,x_4,x_5)$ be the initial cluster. Assume that the seed $(R,\mathbf{y})$ is mutation equivalent to $(Q,\mathbf{x})$. Without loss of generality, let us assume that the vertices of $R$ are indexed such that the isomorphism $R\simeq Q$ is the identity on $Q_0=R_0=\{1,2,3,4,5\}$. Proposition \ref{MutationClassTorus} implies that the mutation $\mu_i$ (with $1\leq i\leq 4$) exchanges the cluster variable $y_i$ with the cluster variable $y_i'$ given by equations
\begin{align*}
&y_1y_1'=y_2^2+y_3y_4,&&y_2y_2'=y_1^2+y_3y_4,\\
&y_3y_3'=y_1y_5+y_2y_4,&&y_4y_4'=y_1y_3+y_2y_5.
\end{align*}
We can extend the mutation and reordering process to functions $(\mathcal{F}^{\times})^5\to(\mathcal{F}^{\times})^5$ by setting
\begin{align*}
&\mu_1(a,b,c,d,e)=\left(b,\tfrac{b^2+cd}{a},c,d,e\right),&&\mu_2(a,b,c,d,e)=\left(\tfrac{a^2+cd}{b},a,c,d,e\right),\\
&\mu_3(a,b,c,d,e)=\left(d,a,b,\tfrac{ae+bd}{c},e\right),&&\mu_4(a,b,c,d,e)=\left(b,c,\tfrac{ac+be}{d},a,e\right).
\end{align*}
for all $(a,b,c,d,e)\in(\mathcal{F}^{\times})^5$. Note that we have $\mu_1=\mu_2^{-1}$ and $\mu_3=\mu_4^{-1}$.

Every mutable vertex of $Q$ is incident to exactly two incoming and two outgoing arrows. (Especially, the quiver is contained in Ladkani's list \cite{L} of quivers for which the number of arrows is invariant under mutation.) Thus $v^T=(1,1,1,1,1)$ is a solution to Grabowski's grading equation $v^TB=0$. By Proposition \ref{MutationClassMarkov}, the same is true for the signed adjacency matrix $B(Q')$ for every quiver $Q'$ which is mutation equivalent to $Q$. Hence, the cluster algebra is graded such that the degree of every cluster variable is equal to $1$. An exchange relation in $\mathcal{A}(\Sigma,M)$ has the form $x_{\alpha}x_{\gamma}+x_{\beta}x_{\delta}=x_{\epsilon}x_{\digamma}$ for some (not necessarily distinct) cluster or frozen variables $x_{\alpha},x_{\beta},x_{\gamma},x_{\delta},x_{\epsilon},x_{\digamma}$ and is hence homogeneous of degree $2$.

\begin{figure}[h!]
\begin{center}
\begin{tikzpicture}[scale=0.8]
\node[fill=red!60!,rectangle,rounded corners,draw] (1) at (-1,0) {$1$};
\node[fill=red!60!,rectangle,rounded corners,draw] (2) at (2,0) {$2$};
\node[fill=red!60!,rectangle,rounded corners,draw] (3) at (4,2) {$3$};
\node[fill=red!60!,rectangle,rounded corners,draw] (4) at (4,-2){$4$};
\node[fill=blue!30!,rectangle,rounded corners,draw] (5) at (6,0) {$5$};

\draw[->, >=latex', shorten >=2pt, shorten <=2pt, bend right=10,thick] (2) to (1);
\draw[->, >=latex', shorten >=2pt, shorten <=2pt, bend left=10,thick] (2) to (1);
\draw[->, >=latex', shorten >=2pt, shorten <=2pt,thick] (3) to (2);
\draw[->, >=latex', shorten >=2pt, shorten <=2pt,thick] (4) to (2);
\draw[->, >=latex', shorten >=2pt, shorten <=2pt,thick] (3) to (4);
\draw[->, >=latex', shorten >=2pt, shorten <=2pt, bend left=10,thick] (1) to (3);
\draw[->, >=latex', shorten >=2pt, shorten <=2pt, bend right=10,thick] (1) to (4);
\draw[->, >=latex', shorten >=2pt, shorten <=2pt,thick] (4) to (5);
\draw[->, >=latex', shorten >=2pt, shorten <=2pt,thick] (5) to (3);

\node[fill=red!60!,rectangle,rounded corners,draw] (1b) at (8,0) {$1$};
\node[fill=red!60!,rectangle,rounded corners,draw] (2b) at (11,0) {$2$};
\node[fill=red!60!,rectangle,rounded corners,draw] (3b) at (13,2) {$3$};
\node[fill=red!60!,rectangle,rounded corners,draw] (4b) at (13,-2){$4$};

\draw[->, >=latex', shorten >=2pt, shorten <=2pt, bend right=10,thick] (2b) to (1b);
\draw[->, >=latex', shorten >=2pt, shorten <=2pt, bend left=10,thick] (2b) to (1b);
\draw[->, >=latex', shorten >=2pt, shorten <=2pt,thick] (3b) to (2b);
\draw[->, >=latex', shorten >=2pt, shorten <=2pt,thick] (4b) to (2b);
\draw[->, >=latex', shorten >=2pt, shorten <=2pt,thick] (3b) to (4b);
\draw[->, >=latex', shorten >=2pt, shorten <=2pt, bend left=10,thick] (1b) to (3b);
\draw[->, >=latex', shorten >=2pt, shorten <=2pt, bend right=10,thick] (1b) to (4b);
\end{tikzpicture}
\end{center}
\caption{The quivers $Q$ and $Q'$ of initial seeds of $\mathcal{A}$ and $\mathcal{A}'$}
\label{InitialQuiver}
\end{figure}
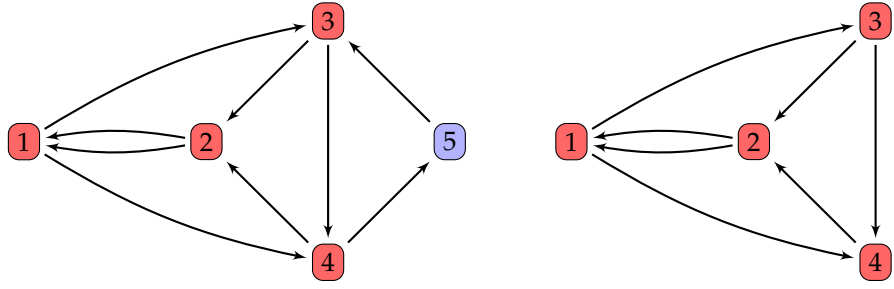

Again, we can find an invariant of the mutation rule. Define a map $T\colon(\mathcal{F}^{\times})^5\to \mathcal{F}$ by sending $(u_1,u_2,u_3,u_4,u_5)$ to the rational expression 
\begin{align*}
\frac{u_1u_2u_5^2+u_1^2u_4u_5+u_3u_4^2u_5+u_2^2u_3u_5+u_1u_2u_4^2+u_2^2u_4u_5+u_1u_2u_3^2+u_1^2u_3u_5+u_3^2u_4u_5}{u_1u_2u_3u_4}\in\mathcal{F}.
\end{align*}
Viewed as an element in the skein algebra $\operatorname{SK}_0(S,P)\supseteq\mathcal{A}(\Sigma,M)$, see Muller \cite[Section 1.3]{M2}, the element $T(x_1,x_2,x_3,x_4,x_5)$ is related to the curve homotopic to the boundary of $S$, i.e. the loop around the disk $D^2$.

One can use Maple to check the following proposition.
\begin{prop}
We have $(T\circ \mu_i)(a,b,c,d,e)=T(a,b,c,d,e)$ for all $i\in\{1,2,3,4,5\}$ and all $a,b,c,d,e\in\mathcal{F}^{\times}$.
\end{prop}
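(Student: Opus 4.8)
The plan is to combine two structural features: the four maps come in mutually inverse pairs, and the numerator of $T$ factors conveniently when read as a quadratic in a single argument, so that each invariance reduces to Vieta's formula exactly as in Propositions~\ref{Invariant} and~\ref{InvariantB}. Note first that the mutable vertices are $1,2,3,4$, so the relevant maps are $\mu_1,\mu_2,\mu_3,\mu_4$ (there is no $\mu_5$, since vertex $5$ is frozen and carries no mutation). The remark after the mutation formulas records $\mu_1=\mu_2^{-1}$ and $\mu_3=\mu_4^{-1}$, and this halves the work: if $T\circ\mu_2=T$, then replacing the argument by $\mu_1(\mathbf{u})$ and using $\mu_2\circ\mu_1=\operatorname{id}$ gives $T=T\circ\mu_1$; likewise invariance under $\mu_3$ forces invariance under $\mu_4$. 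Hence it suffices to treat $\mu_2$ and $\mu_3$. Throughout I write $(a,b,c,d,e)=(u_1,u_2,u_3,u_4,u_5)$ and $N$ for the numerator, so that $T=N/(u_1u_2u_3u_4)$.

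The map $\mu_2$ is the easy case, because its relabelling is merely the transposition of the first two slots, and a term-by-term inspection shows that $N$ is symmetric under $u_1\leftrightarrow u_2$. It therefore remains to treat the pure exchange $u_2\mapsto(u_1^2+u_3u_4)/u_2$, and collecting
\begin{align*}
N=u_5(u_3+u_4)\,u_2^2+u_1(u_3^2+u_4^2+u_5^2)\,u_2+u_5(u_3+u_4)(u_1^2+u_3u_4)
\end{align*}
shows that the two roots in $u_2$ of $N-t\,u_1u_2u_3u_4=0$ have product $u_1^2+u_3u_4$, matching the exchange relation $u_2u_2'=u_1^2+u_3u_4$. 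Both roots thus yield the same value $t$ of $T$, and combined with the $u_1\leftrightarrow u_2$ symmetry this gives $T\circ\mu_2=T$.

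The pair $\mu_3,\mu_4$ is the crux and the main obstacle, for the naive splitting fails: the relabelling attached to $\mu_3$ is the four-cycle $(1\,4\,3\,2)$, under which $N$ is \emph{not} symmetric, so one cannot peel the relabelling off the exchange. Instead I would use a ``crossed'' Vieta identity based on the two factorisations
\begin{align*}
N&=(u_1u_2+u_4u_5)\,u_3^2+u_5(u_1^2+u_2^2+u_4^2)\,u_3+(u_1u_5+u_2u_4)(u_1u_4+u_2u_5),\\
N&=(u_1u_2+u_3u_5)\,u_4^2+u_5(u_1^2+u_2^2+u_3^2)\,u_4+(u_1u_5+u_2u_3)(u_1u_3+u_2u_5).
\end{align*}
Write $P=ae+bd$ for the exchange numerator, so $\mu_3(a,b,c,d,e)=(d,a,b,c',e)$ with $c'=P/c$, and set $t=T(a,b,c,d,e)$. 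Feeding $(a,b,c,d,e)$ into the first factorisation, the condition $T=t$ says that $c$ is a root of
\begin{align*}
(ab+de)\lambda^2+\bigl(e(a^2+b^2+d^2)-t\,abd\bigr)\lambda+P\,(ad+be)=0,
\end{align*}
while feeding the relabelled tuple $(d,a,b,c',e)$ into the second factorisation (its varying slot is now $u_4=c'$) shows that $T(d,a,b,c',e)=t$ is equivalent to $c'$ being a root of
\begin{align*}
(ad+be)\lambda^2+\bigl(e(a^2+b^2+d^2)-t\,abd\bigr)\lambda+P\,(ab+de)=0.
\end{align*}
These two quadratics share the same middle coefficient and have their leading and constant coefficients interchanged via $ab+de\leftrightarrow ad+be$, with the factor $P$ staying put; substituting $\lambda=c'=P/c$ into the second and multiplying through by $c^2/P$ returns exactly the first quadratic evaluated at $c$, which vanishes. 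Hence $c'$ is a root, so $T\circ\mu_3=T$ and therefore $T\circ\mu_4=T$.

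The hard part is thus concentrated in the two factorisations of $N$ and in the observation that passing from the $u_3$-collection to the $u_4$-collection merely swaps $ab+de$ and $ad+be$ while fixing both the middle coefficient and the exchange factor $P=ae+bd$; everything else is Vieta bookkeeping. These polynomial identities are elementary but slightly delicate to verify by hand, and one may of course confirm them — and indeed the whole proposition — by a direct symbolic substitution, as indicated in the statement.
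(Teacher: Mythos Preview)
Your argument is correct. The factorisations of $N$ as quadratics in $u_2$, $u_3$, and $u_4$ check out term by term, and the ``crossed Vieta'' step for $\mu_3$ is exactly right: the two quadratics share the linear coefficient $e(a^2+b^2+d^2)-t\,abd$ and have their leading and constant coefficients swapped between $ab+de$ and $ad+be$ (with the exchange factor $P=ae+bd$ fixed), so substituting $\lambda=P/c$ into the second and clearing $c^2/P$ reproduces the first at $\lambda=c$. Together with the inverse-pair reduction and the $u_1\leftrightarrow u_2$ symmetry this closes the argument.

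This is a genuinely different route from the paper, which simply records that the identity can be checked in Maple and offers no further argument. What your approach buys is an explanation rather than a verification: it exhibits the same Vieta mechanism that drove Propositions~\ref{Invariant} and~\ref{InvariantB}, and in particular the factorisation of the constant term of $N$ (as a quadratic in $u_3$) into $(u_1u_5+u_2u_4)(u_1u_4+u_2u_5)$ makes visible why the exchange relation $y_3y_3'=y_1y_5+y_2y_4$ preserves $T$ after the cyclic relabelling. The paper's approach, by contrast, is shorter and immune to algebraic slips, but opaque. Your remark that the index set should be $\{1,2,3,4\}$ rather than $\{1,2,3,4,5\}$ is also well taken, since vertex $5$ is frozen and carries no mutation.
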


We finish with some questions:
\begin{que}
\begin{itemize}
\item[(a)] Which solutions $(a,b,c,d,1)\in\mathbb{N}^5$ of the (inhomogeneous) Diophantine equation $T(a,b,c,d,1)=9$ can be obtained from the initial solution $(1,1,1,1,1)$ by a sequence of cluster mutations? See Figure \ref{Solutions} for some examples (where we ignore the entry $e=1$). Which solutions $(a,b,c,d,e)\in\mathbb{N}^5$ of the (homogeneous) Diophantine equation $T(a,b,c,d,e)=9$ can be obtained from $(e,e,e,e,e)$ by a sequence of cluster transformations?  Computer calculations suggest that we do not get all solutions by cluster transformations.
\item[(b)] The maps $\mu_1$ and $\mu_2$ are related to Vieta's formula. More precisely, let us define maps $(\mathbb{Q}^{+})^5\to(\mathbb{Q}^{+})^5$ as the twists $t_1(a,b,c,d,e)=(b,a,c,d,e)$ and $t_2(a,b,c,d,e)=(a,b,d,c,e)$ and as the Vieta functions 
\begin{align*}
&v_1(a,b,c,d,e)=\left(\tfrac{b^2+cd}{a},b,c,d,e\right),&&v_2(a,b,c,d,e)=\left(a,\tfrac{a^2+cd}{b},c,d,e\right),\\
&v_3(a,b,c,d,e)=\left(a,b,\tfrac{ab(d^2+e^2)+de(a^2+b^2)}{c(ab+de)},d,e\right),&&v_4(a,b,c,d,e)=\left(a,b,c,\tfrac{ab(c^2+e^2)+ce(a^2+b^2)}{d(ab+ce)},e\right).
\end{align*}
for all $(a,b,c,d,e)\in(\mathbb{Q}^{+})^5$. Then a simple calculation shows that $(T\circ \varphi)(a,b,c,d,e)=T(a,b,c,d,e)$ for all maps $\varphi\in\{v_1,v_2,v_3,v_4,t_1,t_2\}$ and all $a,b,c,d,e\in\mathcal{F}^{\times}$. Moreover, the maps satisfy the relations
\begin{align*}
&v_1t_1=t_1v_2=\mu_2, &&v_3t_1=t_1v_3,\\
&v_1t_2=t_2v_1,&&v_3t_2=t_2v_4\\
&v_2t_1=t_1v_1=\mu_1,&&v_4t_1=t_1v_4\\
&v_2t_2=t_2v_2,&&v_4t_2=t_2v_3.\\
&t_2t_1\mu_3=\mu_4t_1t_2,
\end{align*}
Thus $v_2=t_1v_1t_1$, $v_4=t_2v_3t_2$, $\mu_1=t_1v_1$, $\mu_2=t_1v_2$, $\mu_4=t_1t_2\mu_3t_1t_2$ so that the group generated by the functions is generated as $G=\langle v_1,v_3,\mu_3,t_1,t_2\rangle$. Which solutions $(a,b,c,d,e)\in\mathbb{N}^5$ to the (homogeneous) Diophantine equation $T(a,b,c,d,e)=9$ be obtained from $(1,1,1,1,1)$ by applying $G$ and rescaling possible rational entries? Computer calculations suggest that we do not get all solutions by such transformations.
\end{itemize}
\end{que}

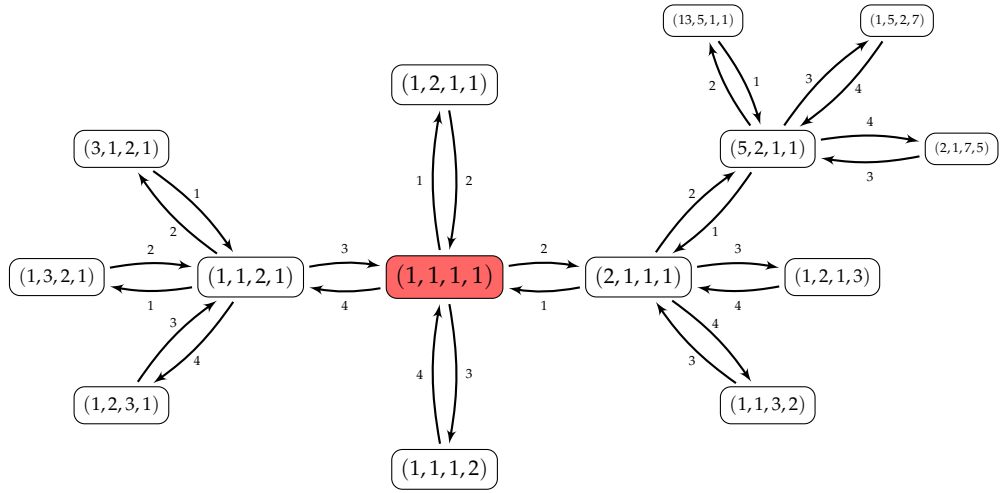
\begin{figure}[h!]
\begin{center}
\begin{tikzpicture}[scale=0.85]
\node[fill=red!60!,rectangle,rounded corners,draw] (1) at (0,0) {\small{$(1,1,1,1)$}};
\node[rectangle,rounded corners,draw] (2) at (3,0) {\footnotesize{$(2,1,1,1)$}};
\node[rectangle,rounded corners,draw] (3) at (0,3) {\footnotesize{$(1,2,1,1)$}};
\node[rectangle,rounded corners,draw] (4) at (-3,0) {\footnotesize{$(1,1,2,1)$}};
\node[rectangle,rounded corners,draw] (5) at (0,-3) {\footnotesize{$(1,1,1,2)$}};
\node[rectangle,rounded corners,draw] (6) at (6,0) {\scriptsize{$(1,2,1,3)$}};
\node[rectangle,rounded corners,draw] (7) at (5,-2) {\scriptsize{$(1,1,3,2)$}};
\node[rectangle,rounded corners,draw] (8) at (5,2) {\scriptsize{$(5,2,1,1)$}};
\node[rectangle,rounded corners,draw] (9) at (-6,0) {\scriptsize{$(1,3,2,1)$}};
\node[rectangle,rounded corners,draw] (10) at (-5,2) {\scriptsize{$(3,1,2,1)$}};
\node[rectangle,rounded corners,draw] (11) at (-5,-2) {\scriptsize{$(1,2,3,1)$}};
\node[rectangle,rounded corners,draw] (12) at (4,4) {\tiny{$(13,5,1,1)$}};
\node[rectangle,rounded corners,draw] (13) at (7,4) {\tiny{$(1,5,2,7)$}};
\node[rectangle,rounded corners,draw] (14) at (8,2) {\tiny{$(2,1,7,5)$}};

\draw[->, >=latex', shorten >=2pt, shorten <=2pt, bend left=10,thick,above] (1) to node {\tiny{$2$}} (2);
\draw[->, >=latex', shorten >=2pt, shorten <=2pt, bend left=10,thick,below] (2) to node {\tiny{$1$}} (1);
\draw[->, >=latex', shorten >=2pt, shorten <=2pt, bend left=10,thick,left] (1) to node {\tiny{$1$}} (3);
\draw[->, >=latex', shorten >=2pt, shorten <=2pt, bend left=10,thick,right] (3) to node {\tiny{$2$}} (1);
\draw[->, >=latex', shorten >=2pt, shorten <=2pt, bend left=10,thick,below] (1) to node {\tiny{$4$}} (4);
\draw[->, >=latex', shorten >=2pt, shorten <=2pt, bend left=10,thick,above] (4) to node {\tiny{$3$}} (1);
\draw[->, >=latex', shorten >=2pt, shorten <=2pt, bend left=10,thick,right] (1) to node {\tiny{$3$}} (5);
\draw[->, >=latex', shorten >=2pt, shorten <=2pt, bend left=10,thick,left] (5) to node {\tiny{$4$}} (1);
\draw[->, >=latex', shorten >=2pt, shorten <=2pt, bend left=10,thick,above] (2) to node {\tiny{$3$}} (6);
\draw[->, >=latex', shorten >=2pt, shorten <=2pt, bend left=10,thick,below] (6) to node {\tiny{$4$}} (2);
\draw[->, >=latex', shorten >=2pt, shorten <=2pt, bend left=10,thick,above] (2) to node {\tiny{$4$}} (7);
\draw[->, >=latex', shorten >=2pt, shorten <=2pt, bend left=10,thick,below] (7) to node {\tiny{$3$}} (2);
\draw[->, >=latex', shorten >=2pt, shorten <=2pt, bend left=10,thick,above] (2) to node {\tiny{$2$}} (8);
\draw[->, >=latex', shorten >=2pt, shorten <=2pt, bend left=10,thick,below] (8) to node {\tiny{$1$}} (2);
\draw[->, >=latex', shorten >=2pt, shorten <=2pt, bend left=10,thick,above] (9) to node {\tiny{$2$}} (4);
\draw[->, >=latex', shorten >=2pt, shorten <=2pt, bend left=10,thick,below] (4) to node {\tiny{$1$}} (9);
\draw[->, >=latex', shorten >=2pt, shorten <=2pt, bend left=10,thick,above] (10) to node {\tiny{$1$}} (4);
\draw[->, >=latex', shorten >=2pt, shorten <=2pt, bend left=10,thick,below] (4) to node {\tiny{$2$}} (10);
\draw[->, >=latex', shorten >=2pt, shorten <=2pt, bend left=10,thick,above] (11) to node {\tiny{$3$}} (4);
\draw[->, >=latex', shorten >=2pt, shorten <=2pt, bend left=10,thick,below] (4) to node {\tiny{$4$}} (11);
\draw[->, >=latex', shorten >=2pt, shorten <=2pt, bend left=10,thick,left] (8) to node {\tiny{$2$}} (12);
\draw[->, >=latex', shorten >=2pt, shorten <=2pt, bend left=10,thick,right] (12) to node {\tiny{$1$}} (8);
\draw[->, >=latex', shorten >=2pt, shorten <=2pt, bend left=10,thick,left] (8) to node {\tiny{$3$}} (13);
\draw[->, >=latex', shorten >=2pt, shorten <=2pt, bend left=10,thick,right] (13) to node {\tiny{$4$}} (8);
\draw[->, >=latex', shorten >=2pt, shorten <=2pt, bend left=10,thick,above] (8) to node {\tiny{$4$}} (14);
\draw[->, >=latex', shorten >=2pt, shorten <=2pt, bend left=10,thick,below] (14) to node {\tiny{$3$}} (8);

\end{tikzpicture}
\end{center}
\caption{Solutions to the Diophantine equation $T(a,b,c,d,1)=9$}
\label{Solutions}
\end{figure}


\begin{thebibliography}{99}
\bibitem{A} M. Aigner, \emph{Markov's Theorem and 100 Years of the Uniqueness Conjecture}: Springer (2013).
\bibitem{BFZ} A. Berenstein, S. Fomin and A. Zelevinsky, \emph{Cluster algebras III: Upper bounds and double Bruhat cells}: Duke Mathematical Journal \textbf{126}, no. 1, (2005), 1--52. Preprint \href{http://arxiv.org/abs/math/0305434} {arXiv:math/0305434}.
\bibitem{BBH} A. Beineke, T. Br\"ustle and L. Hille, \emph{Cluster-Cyclic Quivers with three Vertices and the Markov Equation}: Algebras and Representation Theory \textbf{14}, no. 1 (2011), 97--112. Preprint \href{http://arxiv.org/abs/math/0612213} {arXiv:math/0612213}.
\bibitem{CC} Ph. Caldero and F. Chapoton, \emph{Cluster algebras as Hall algebras of quiver representations}: Commentarii Mathematici Helvetici \textbf{81}, no. 3 (2006), 595--616. Preprint \href{http://arxiv.org/abs/math/0410187}{arXiv:math/0410187}.
\bibitem{CKLP} G. Cerulli-Irelli, B. Keller, D. Labardini-Fragoso, P.\,G. Plamondon, \emph{Linear independence of cluster monomials for skew-symmetric cluster algebras}: Compositio Mathematica \textbf{149}, no. 10 (2013), 1753--1764. Preprint \href{http://arxiv.org/abs/1203.1307}{arXiv:1203.1307}.
\bibitem{FG1} V. Fock and A. Goncharov, \emph{Moduli spaces of local systems and higher Teichm\"uller theory}: Publications Math\'ematiques. Institut des Hautes \'Etudes Scientifiques \textbf{103}, no. 1 (2006), 1--211. Preprint \href{http://arxiv.org/abs/math.AG/0311149} {arXiv:math/0311149}.
\bibitem{FG2}  V. Fock and A. Goncharov, \emph{Cluster ensembles, quantization and the dilogarithm}: Annales scientifiques de l'\'Ecole normale sup\'erieure \textbf{42}, no. 6 (2009), 865--930. Preprint \href{http://arxiv.org/abs/math/0311245} {arXiv:math/0311245}.
\bibitem{FST2} A. Felikson, M. Shapiro and P. Tumarkin, \emph{Skew-symmetric cluster algebras of finite mutation type}: Journal of the European Mathematical Society \textbf{14} (2012), 1135--1180. Preprint \href{http://arxiv.org/abs/0811.1703}{arXiv:0811.1703}.
\bibitem{FST3} A. Felikson, M. Shapiro and P. Tumarkin, \emph{Cluster algebras of finite mutation type via unfoldings}: International Mathematics Research Notices \textbf{2012}, no. 8 (2012), 1768--1804. Preprint \href{http://arxiv.org/abs/1006.4276}{arXiv:1006.4276}.
\bibitem{FST} S. Fomin, M. Shapiro and D. Thurston, \emph{Cluster algebras and triangulated surfaces. Part I: Cluster complexes}: Acta Mathematica \textbf{201}, no.1 (2008), 83--146. Preprint \href{http://arxiv.org/abs/math/0608367} {arXiv:math/0608367}.
\bibitem{FZ} S. Fomin and A. Zelevinsky, \emph{Cluster algebras I: Foundations}: Journal of the American Mathematical Society \textbf{15}, no. 2 (2002), 497--529. Preprint \href{http://arxiv.org/abs/math/0104151} {arXiv:math/0104151}.
\bibitem{FZ2} S. Fomin and A. Zelevinsky, \emph{Cluster algebras II: Finite type classification}: Inventiones Mathematicae \textbf{154}, no. 1 (2003), 63--121. Preprint \href{http://arxiv.org/abs/math/0208229} {arXiv:math/0208229}.
\bibitem{FZ4} S. Fomin and A. Zelevinsky, \emph{Cluster algebras IV. Coefficients}: Compositio Mathematia, \textbf{143}, no.1 (2007), 112--164. Preprint \href{http://arxiv.org/abs/math/0602259} {arXiv:math/0602259}.
\bibitem{G} J. Grabowski, \emph{Graded cluster algebras}: to appear in Journal of Algebraic Combinatorics. Preprint \href{http://arxiv.org/abs/1309.6170.pdf}{arXiv:1309.6170} .
\bibitem{GHKK} M. Gross, P. Hacking, S. Keel and M. Kontsevich, \emph{Canonical bases for cluster algebras}: Preprint \href{http://arxiv.org/abs/1411.1394}{arXiv:1411.1394}.
\bibitem{GR} A.\,L. Gorodentsev and A.\,N. Rudakov, \emph{Exceptional vector bundles on projective spaces}: Duke Mathematical Journal \textbf{54}, no. 1 (1987), 115--130.
\bibitem{GSV} M. Gekhtman, M. Shapiro and A. Vainshtein, \emph{Cluster algebras and Weil-Petersson forms}: Duke Mathematical Journal \textbf{127}, no. 2 (2005), 291--311. Preprint \href{http://arxiv.org/abs/math/0309138} {arXiv:math/03091380}.
\bibitem{Ke} B. Keller, \emph{Cluster algebras and derived categories}: to appear in Proceedings of the GCOE Conference Derived categories 2011. Preprint \href{http://arxiv.org/abs/1202.4161}{arXiv:1202.4161}.
\bibitem{L} S. Ladkani, \emph{Which mutation classes of quivers have constant number of arrows?} Preprint \href{http://arXiv.org/abs/1104.0436} {arXiv.1104.0436}.
\bibitem{M} A. Markov, \emph{Sur les formes quadratiques binaires ind\'efinies}: Mathematische Annalen \textbf{15}, no. 3-4 (1879), 381--406.
\bibitem{M2} G. Muller, \emph{Skein algebras and cluster algebras of marked surfaces}: to appear in Quantum Topology. Preprint \href{http://arxiv.org/abs/1204.0020} {arXiv.1204.0020} (2012).
\bibitem{PZ} X. Peng, J. Zhang, \emph{Cluster algebras and Markoff numbers}: CaMUS \textbf{3} (2012),19--26.


\end{thebibliography}
\end{document}